\newtheorem{thm}{Theorem}[section]
\newtheorem{cor}[thm]{Corollary}
\newtheorem{lem}[thm]{Lemma}
\newtheorem{prop}[thm]{Proposition}
\newtheorem{claim}[thm]{Claim}
\theoremstyle{definition}
\newtheorem{defn}[thm]{Definition}
\newtheorem{exam}[thm]{Example}
\newtheorem{ques}[thm]{Question}
\theoremstyle{remark}
\numberwithin{equation}{section}
\newcommand{\Z}{\mathbb Z}
\newcommand{\Q}{\mathbb Q}
\newcommand{\fix}{\mathrm{Fix}}
\newcommand{\ind}{\mathrm{ind}}
\newcommand{\rk}{\mathrm{rank}}
\newcommand{\edo}{\mathrm{End}}
\newcommand{\ir}{\mathrm{Ir}}
\newcommand{\eq}{\mathrm{Eq}}
\newcommand{\F}{\mathbf{F}}      %fixed point class F
\newcommand{\intt}{\mathrm{int}}
\begin{document}

\title{The group fixed by a family of endomorphisms of a surface group}

\author{Jianchun Wu}
\address{Department of Mathematics, Soochow University, Suzhou 215006, CHINA}
\email{wujianchun@suda.edu.cn}

\author{Qiang Zhang}
\address{School of Mathematics and Statistics, Xi'an Jiaotong University, Xi'an 710049, CHINA}
\email{zhangq.math@mail.xjtu.edu.cn}

%\thanks{Partially supported by the Fundamental Research Funds for the Central Universities.}
%The author thanks Professor Boju Jiang and Shicheng Wang for valuable communications and suggestions.}

\subjclass{57M07, 20F34, 55M20}

\keywords{Fixed subgroups, intersections, endomorphisms, surface groups, inert}

%\date{}%
%\dedicatory{}%
%\commby{}%
% ----------------------------------------------------------------
\begin{abstract}
For a closed surface $S$ with $\chi(S)<0$, we show that the fixed subgroup of a family $\mathcal B$ of endomorphisms of $\pi_1(S)$ has $\rk \fix\mathcal B\leq \rk \pi_1(S)$. In particular, if $\mathcal B$ contains a non-epimorphic endomorphism, then $\rk \fix\mathcal B\leq \frac{1}{2} \rk \pi_1(S)$. We also show that geometric subgroups of $\pi_1(S)$ are inert, and hence the fixed subgroup of a family of epimorphisms of $\pi_1(S)$ is also inert.
\end{abstract}
\maketitle

% ----------------------------------------------------------------
\section{Introduction}

For a finitely generated group $G$, we denote the rank (i.e. the minimal number of the generators) of $G$ by $\rk G$. There are lots of research on the intersection of subgroups of a finitely generated group $G$ in the literature. For example, when $G$ is a free group, H. Neumann (see \cite{N1} and \cite{N2}) conjectured that for any two finitely generated subgroups $A$ and $B$ of $G$, $$\rk(A\cap B)-1 \leq (\rk A-1)(\rk B -1).$$
This conjecture was proved independently by I. Mineyev \cite{M} and J. Friedman \cite{F}.

Before this celebrated result was proved, it had been shown that for some special subgroups of free groups, people could say more about their intersection. Denote the set of endomorphisms of $G$ by $\edo(G)$. For a family $\mathcal B$ of endomorphisms of $G$, namely, $\mathcal B\subseteq \edo(G)$, the subgroup fixed by $\mathcal B$ is
$$\fix(\mathcal B):=\{g\in G|\phi(g)=g, \forall \phi\in \mathcal B\}.$$
It is called the $fixed$ $subgroup$ of $\mathcal B$. We abbreviate $\fix(\mathcal B)$ to $\fix\mathcal B$, and $\fix(\{\phi\})$ to $\fix\phi$ for any single endomorphism $\phi:G\to G$ in the context. It is obvious that $\fix\mathcal{B}=\bigcap_{\phi\in\mathcal{B}} \fix\phi$.

In \cite{BH}, M. Bestvina and M. Handel proved the Scott's conjecture that for any automorphism $\phi$ of a finitely generated free group $G$, $$\rk\fix\phi \leq \rk G.$$

In the book \cite{DV}, W. Dicks and E. Ventura generalized Bestvina-Handel's result of the fixed subgroup of a single automorphism to a family of injective endomorphisms. They proved the following theorem.

\begin{thm}\cite[Corollary IV.5.8]{DV}\label{Dicks-Ventura thm}
Let $G$ be a finitely generated free group, and $\mathcal B$ a family of injective endomorphisms of $G$. Then
$$\rk \fix\mathcal B\leq\rk G.$$
\end{thm}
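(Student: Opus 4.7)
The plan is to promote Bestvina--Handel's theorem for a single automorphism to an \emph{inertness} statement, then to extend to injective endomorphisms via an overgroup trick, and finally to close under intersection. Recall that a subgroup $H\le G$ is called \emph{inert} if $\rk(H\cap K)\le\rk K$ for every finitely generated $K\le G$. The first step is to strengthen Bestvina--Handel to: for every $\phi\in\aut(G)$, $\fix\phi$ is inert in $G$. The idea is to rerun the train-track machinery of \cite{BH} in a relative setting, replacing the rose realizing $G$ by a Stallings-fold graph realizing the test subgroup $H$; this yields the bound $\rk(\fix\phi\cap H)\le\rk H$. To pass from automorphisms to injective endomorphisms I would invoke an Imrich--Turner-type embedding: any injective $\phi\in\edo(G)$ extends to an automorphism $\widetilde\phi$ of a finitely generated free overgroup $\widetilde G\supseteq G$, with $\fix\phi=\fix\widetilde\phi\cap G$. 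Applying inertness of $\fix\widetilde\phi$ in $\widetilde G$ to the f.g.\ subgroup $G\le\widetilde G$ gives $\rk\fix\phi\le\rk G$; the same computation against an arbitrary f.g.\ $K\le G$ shows that $\fix\phi$ is itself inert in $G$.

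For a family $\mathcal B\subseteq\edo(G)$ I would exploit the closure of inert subgroups under finite intersection: for inert $H_1,H_2\le G$ and any f.g.\ $K\le G$,
\[
\rk\bigl((H_1\cap H_2)\cap K\bigr)=\rk\bigl(H_2\cap(H_1\cap K)\bigr)\le\rk(H_1\cap K)\le\rk K.
\]
Hence $\fix\mathcal B'$ is inert with rank at most $\rk G$ for every \emph{finite} subfamily $\mathcal B'\subseteq\mathcal B$. To handle an infinite $\mathcal B$ I would pick a finite $\mathcal B_0\subseteq\mathcal B$ minimizing $\rk\fix\mathcal B'$ over finite subfamilies; by inertness, adjoining any further $\phi\in\mathcal B$ preserves this rank, and a Takahasi-type stabilization for descending chains of inert subgroups in a f.g.\ free group then forces $\fix\mathcal B=\fix\mathcal B_0$, giving $\rk\fix\mathcal B\le\rk G$.

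The main obstacle is the relative Bestvina--Handel bound. Their original argument controls only the global rank of $\fix\phi$, whereas inertness demands control against every f.g.\ subgroup of $G$; upgrading to this form requires either a genuinely relative adaptation of the train-track argument or the algebraic replacements developed in \cite{DV}. The extension from a single endomorphism to an arbitrary family is then largely formal, once the stabilization lemma for descending chains of inert subgroups is in hand.
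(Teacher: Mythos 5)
Your proposal rests on a tool that does not exist: an injective endomorphism of a finitely generated free group need \emph{not} extend to an automorphism of any finitely generated free overgroup. Already $\phi\colon a\mapsto a^2$ on $G=\langle a\rangle$ is a counterexample. If $\widetilde\phi\in\aut(\widetilde G)$ restricted to $a\mapsto a^2$ for some f.g.\ free $\widetilde G\supseteq G$, then in $H_1(\widetilde G)\cong\Z^n$ the class of $a$ would be an integer eigenvector of a $GL_n(\Z)$-matrix with eigenvalue $2$, which is impossible (the characteristic polynomial has constant term $\pm1$) unless $[a]=0$; and if $a$ lay in the commutator subgroup, invariance of stable commutator length under automorphisms would give $\mathrm{scl}(a)=\mathrm{scl}(a^2)=2\,\mathrm{scl}(a)$, contradicting the Duncan--Howie bound $\mathrm{scl}\geq \frac12$ for nontrivial elements of $[\widetilde G,\widetilde G]$. (The natural candidate for your overgroup, the direct limit of $G\xrightarrow{\phi}G\xrightarrow{\phi}\cdots$, is $\Z[1/2]$ in this example: neither free nor finitely generated.) The genuine Imrich--Turner reduction goes \emph{down}, not up: one restricts $\phi$ to the stable image $G_\omega=\bigcap_n\phi^n(G)$, on which an injective endomorphism becomes an automorphism, with $\rk G_\omega\leq\rk G$ by the descending-chain rank lemma of \cite[Exercise 33, p.118]{MKS}.

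The other two steps also carry real gaps. Your step (a), that $\fix\phi$ is inert in $G$ for every automorphism $\phi$, is not a routine ``relative rerun'' of \cite{BH}: $\phi$ need not preserve the test subgroup $H$, so there is no self-map of a Stallings graph carrying $H$ to put into train-track form; this inertness statement is exactly the hard content of Dicks--Ventura (\cite[Theorem IV.5.7]{DV}), so as written you are assuming essentially the theorem you set out to prove. (Your closure-of-inertness-under-finite-intersection computation is fine.) In the final step, minimality of $\rk\fix\mathcal B_0$ together with inertness yields only equality of \emph{ranks} for larger finite subfamilies, not $\fix\mathcal B=\fix\mathcal B_0$: descending chains of inert subgroups of bounded rank need not stabilize (e.g.\ $\langle a\rangle>\langle a^2\rangle>\langle a^4\rangle>\cdots$), so your ``stabilization lemma'' is false as stated; that step is repairable by the same \cite{MKS} lemma the paper uses in the proof of Corollary \ref{main corollary}, since $\edo(G)$ is countable. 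Finally, note that the paper offers no proof of this statement at all: it is imported verbatim as \cite[Corollary IV.5.8]{DV}, so a self-contained proof would have to reproduce that book's machinery rather than the shortcuts proposed here.
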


They also showed that $\fix\mathcal B$ is inert in $G$ (see \cite[Theorem IV.5.7]{DV}). A subgroup $A$ is $inert$ in $G$ if for every subgroup $B\leq G$,
$$\rk(A\cap B)\leq \rk B.$$

In the paper \cite{B}, G. Bergman proved that Dicks-Ventura's result (Theorem \ref{Dicks-Ventura thm}) also holds for any family of endomorphisms but kept the following question open: Is the fixed subgroup of a family of endomorphisms of a finitely generated free group inert?

When $G$ is a $surface$ $group$, namely, $G$ is isomorphic to the fundamental group of some connected closed surface $S$ with Euler characteristic $\chi(S)<0$. In \cite{So1}, T. Soma  estimated the rank of the intersection of any two subgroups $A$ and $B$ of a surface group in terms of ranks of $A$ and $B$. In \cite{So2}, he showed an enhanced version of the result of \cite{So1}:
$$\rk(A\cap B)-1\leq 1161(\rk A-1)(\rk B-1).$$
In fact, since Hanna Neumann's Conjecture was proved, by \cite[Section 8]{M1}, T. Soma's result should be improved to be:
$$\rk(A\cap B)-1\leq (\rk A-1)(\rk B-1).$$

When $G$ is a one-relator group, D. Collins \cite{C}\cite{C2} studied the intersection of Magnus subgroups that we will describe in Section 4.

\vspace{6pt}

In this paper, we consider the intersection of the fixed subgroups of endomorphisms of a surface group and prove a theorem similar to that B. Bergman do on free group. We also consider the intersection of a subgroup with a geometric subgroup defined below that is similar but not the same as that of P. Scott \cite{S}.

A connected subsurface (i.e., two dimensional submanifold) $F$ of a connected surface $S$ is called $incompressible$ if the natural homomorphism $\pi_1(F)\to\pi_1(S)$ induced by the inclusion $F\hookrightarrow S$ is injective. If $F$ is incompressible in $S$, then we can think of $\pi_1(F)$ as a subgroup of $\pi_1(S)$. Subgroups which arise in this way are called $geometric$.

For the fixed subgroup of a single endomorphism of a surface group, B. Jiang, S. Wang and Q. Zhang \cite{JWZ} showed that

\begin{thm}\cite[Theorem 1.2]{JWZ}\label{JWZ thm}
Let $G$ be a surface group, and $\phi$ an endomorphism of $G$. Then

{\rm(1)} $\rk \fix \phi\leq \rk G$ if $\phi$ is epimorphic, with equality if and only if $\phi=id$;

{\rm(2)} $\rk \fix \phi\leq \frac{1}{2}\rk G$ if $\phi$ is not epimorphic.
\end{thm}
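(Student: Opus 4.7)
The plan is to realize $\phi$ topologically (using that $S$ is a $K(G,1)$) and combine the structure theory of surface maps with the free-group results of Dicks--Ventura and Bergman recalled in the introduction. For part (1), surface groups are Hopfian (being finitely generated and residually finite), so any epimorphic $\phi$ is automatically an automorphism, and by Dehn--Nielsen--Baer it is induced by a self-homeomorphism $f:S\to S$, which the Thurston classification places in one of three types. If $f$ is pseudo-Anosov, its fixed set is finite and $\fix\phi$ has rank at most $1$. If $f$ is periodic, Nielsen fixed point theory identifies $\fix\phi$ with the orbifold fundamental group of $S/\langle f\rangle$, whose rank is controlled by Riemann--Hurwitz. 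If $f$ is reducible, one decomposes along the canonical reducing multicurve and combines piecewise bounds via a Bass--Serre rank estimate. Each case gives $\rk\fix\phi\leq\rk G$, with equality forcing $f=\mathrm{id}$ on every piece, hence $\phi=\mathrm{id}$.

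For part (2), let $H=\phi(G)$ and proceed in three steps. Step 1: $H$ has infinite index in $G$. Otherwise $[G:H]=n\geq 2$, and $H\cong\pi_1(\tilde S)$ for an $n$-fold cover $\tilde S$; the identity $\chi(\tilde S)=n\chi(S)$ then forces $\rk H>\rk G$, contradicting the fact that $H$ is a homomorphic image of $G$. Step 2: $H$ is free, since an infinite cover of a closed surface is non-compact and hence has free fundamental group; in particular $\fix\phi\subseteq H$ is free. Step 3: $\rk H\leq\tfrac12\rk G$. The isomorphism $H\cong G/\ker\phi$ exhibits $H$ as a free quotient of $G=\pi_1(S)$, and via Poincar\'e duality the image of $H^1(H;\mathbb{Q})\to H^1(G;\mathbb{Q})$ is a totally isotropic subspace for the intersection pairing (because all cup products vanish in $H^2(H)=0$). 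A totally isotropic subspace of a $2g$-dimensional symplectic space has dimension at most $g$, yielding $\rk H\leq g=\tfrac12\rk G$ in the orientable case; the non-orientable case is handled analogously using $\mathbb{F}_2$-coefficients.

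Finally, $\phi(H)=\phi^{2}(G)\subseteq\phi(G)=H$, so $\phi$ restricts to an endomorphism $\phi|_H:H\to H$ of the free group $H$ with $\fix(\phi|_H)=\fix\phi$, and Bergman's extension of Theorem~\ref{Dicks-Ventura thm} yields $\rk\fix\phi\leq\rk H\leq\tfrac12\rk G$, as required. The main obstacle I anticipate is the equality case of part (1): while Thurston's classification readily gives the rank bound $\rk\fix\phi\leq\rk G$, ruling out every non-identity automorphism from saturating it requires careful bookkeeping in the reducible case, since one must show that any nontrivial periodic rotation or pseudo-Anosov component on an invariant piece strictly decreases the fixed-subgroup rank contributed by that piece.
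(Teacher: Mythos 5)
Your part (2) is essentially complete and correct: the infinite-index argument, the freeness of $H=\phi(G)$, the restriction $\phi|_H$ combined with Bergman's extension of Theorem~\ref{Dicks-Ventura thm}, and the bound $\rk\phi(G)\leq\frac12\rk G$ all go through. Your Poincar\'e-duality argument (the image of $H^1(H;\Q)\to H^1(G;\Q)$ is totally isotropic since $H^2(H)=0$) is a pleasant self-contained substitute for the route taken in the paper, which simply quotes Zieschang's computation of the inner rank $\ir(G)=[\frac12\rk G]$ (Lemma~\ref{subgp of surface gp}); the non-orientable case over $\mathbb{F}_2$ does work but needs the extra observation that a totally isotropic subspace of the diagonal form on $\mathbb{F}_2^k$ has dimension at most $[k/2]$, which is not quite ``analogous'' to the symplectic case.

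Part (1), however, has genuine gaps, and they sit exactly where the content of \cite[Theorem 1.2]{JWZ} lies. First, your periodic case is wrong as stated: $\fix\phi$ is not the orbifold fundamental group of $S/\langle f\rangle$ (that group \emph{contains} $G$ with finite index); for a finite-order isometry the fixed point classes are points or circles, so $\fix\phi$ is trivial or cyclic unless $f=\mathrm{id}$. Second, you have ignored the basepoint issue: realizing $\phi$ by a homeomorphism $f$ only identifies $\phi$ with $f_w$ for some $f$-route $w$, and when the associated fixed point class is empty one needs the separate bound $\rk(f,\F)\leq 1$ (Theorem~\ref{fixed subgp of empty fixed pt class of standard map}); your case analysis silently assumes $\phi=f_*$ at an actual fixed point. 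Third, the reducible case and the equality characterization are only named (``Bass--Serre rank estimate,'' ``careful bookkeeping''), but this is the heart of the theorem: the published proof runs through the Jiang--Guo standard form, the fact that every fixed point class is an incompressible subsurface (Lemma~\ref{Subsurface}), and an Euler-characteristic/index count summed over \emph{all} fixed point classes, which is what forces strict inequality for $\phi\neq\mathrm{id}$. Note also that the present paper does not reprove this statement at all --- it cites \cite{JWZ} and instead proves the refinement Theorem~\ref{aut fixed subgp are geometric} using precisely that machinery --- so the pieces you are missing are not optional polish but the substance of the argument.
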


We generalize this result to any family of endomorphisms. The main result of this paper is that

\begin{thm}\label{main thm}
Let $G$ be a surface group, and $\mathcal B$ a family of endomorphisms of $G$. Then

{\rm(1)} $\rk \fix\mathcal B\leq \rk G$, with equality if and only if $\mathcal B=\{id\}$;

{\rm(2)} $\rk \fix\mathcal B\leq \frac{1}{2}\rk G$, if $\mathcal B$ contains a non-epimorphic endomorphism.
\end{thm}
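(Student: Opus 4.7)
My plan is to deduce Theorem \ref{main thm} from Theorem \ref{JWZ thm} (the single-endomorphism case) combined with the inertness machinery developed earlier in the paper: namely that for any epimorphism $\phi$ of $G=\pi_1(S)$ the subgroup $\fix\phi$ is geometric (refining Theorem \ref{JWZ thm}(1)), and that geometric subgroups of $\pi_1(S)$ are inert. The key closure property is that intersections of inert subgroups stay inert: if $A_1,A_2\leq G$ are inert and $B\leq G$, then $\rk((A_1\cap A_2)\cap B)=\rk(A_1\cap(A_2\cap B))\leq\rk(A_2\cap B)\leq\rk B$, so any finite intersection of inert subgroups is inert, and a bounded-rank stabilisation argument (reducing to a finite sub-family with the same fixed subgroup) extends this to arbitrary families.

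For part (1), assume every $\phi\in\mathcal B$ is an epimorphism. Then $\fix\mathcal B=\bigcap_{\phi\in\mathcal B}\fix\phi$ is an intersection of inert subgroups, hence inert in $G$. Taking $B=G$ in the inertness definition gives $\rk\fix\mathcal B\leq\rk G$. For the equality case, applying inertness of $\fix\mathcal B$ with $B=\fix\phi$ (for any $\phi\in\mathcal B$) yields $\rk\fix\mathcal B=\rk(\fix\mathcal B\cap\fix\phi)\leq\rk\fix\phi$; if $\rk\fix\mathcal B=\rk G$ then $\rk\fix\phi=\rk G$ for every $\phi$, which by Theorem \ref{JWZ thm}(1) forces $\phi=id$, so $\mathcal B=\{id\}$.

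For part (2), fix a non-epimorphism $\phi_0\in\mathcal B$. By Theorem \ref{JWZ thm}(2), $\rk\fix\phi_0\leq\frac{1}{2}\rk G$; moreover $\fix\phi_0$ is free, since it lies inside the proper (hence infinite-index) subgroup $\phi_0(G)$ of $G$. Let $\mathcal B_e\subseteq\mathcal B$ denote the sub-family of epimorphisms; by part (1), $\fix\mathcal B_e$ is inert in $G$, and inertness applied with $B=\fix\phi_0$ gives $\rk(\fix\mathcal B_e\cap\fix\phi_0)\leq\rk\fix\phi_0\leq\frac{1}{2}\rk G$. When $\phi_0$ is the only non-epimorphism in $\mathcal B$, this intersection equals $\fix\mathcal B$ and the bound is immediate. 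The main obstacle I anticipate is the remaining case, in which $\mathcal B$ contains further non-epimorphisms: $\fix\mathcal B$ then sits as a proper subgroup of the free group $\fix\mathcal B_e\cap\fix\phi_0$, and subgroup rank in a free group is not controlled by the ambient rank. My plan here is to realise $\fix\mathcal B$ as the simultaneous fixed subgroup of a family of genuine endomorphisms of the free group $\fix\phi_0$, after which Bergman's free-group theorem (recalled in the introduction) yields $\rk\fix\mathcal B\leq\rk\fix\phi_0\leq\frac{1}{2}\rk G$; the delicate step is replacing each naive restriction $\phi_i|_{\fix\phi_0}$, which is only a homomorphism into $G$, by an endomorphism of $\fix\phi_0$ with the same fixed set in $\fix\phi_0$, using that $\phi_0$ acts as the identity on $\fix\phi_0$ and that $G$ is Hopfian.
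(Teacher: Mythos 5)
Your part (1) follows the paper's route (fixed subgroups of non-identity epimorphisms are geometric, geometric subgroups are inert, and inertness passes to intersections), and the finite-intersection closure argument you give is correct. However, your justification for arbitrary families is not: a strictly descending chain of finitely generated free groups need not stabilize, so one cannot in general ``reduce to a finite sub-family with the same fixed subgroup.'' What is true, and what the paper actually invokes, is Takahasi's theorem (\cite[Exercise 33, p.118]{MKS}): if the intersection of a descending chain of free groups has rank $\geq n$, then almost all terms of the chain have rank $\geq n$. This bounds $\rk\fix\mathcal B$ by the ranks along the chain $\fix\mathcal B_1\geq\fix\mathcal B_2\geq\cdots$ without any stabilization. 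With that repair, part (1) is essentially the paper's Case (1).

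Part (2) contains a genuine gap, and it sits exactly at the step you label ``delicate'' and leave as a plan. There is no reason that $\phi_i(\fix\phi_0)\subseteq\fix\phi_0$ for the other members $\phi_i\in\mathcal B$: the subgroup $\fix\phi_0$ is not invariant under the family, and neither the fact that $\phi_0$ restricts to the identity on it nor Hopficity of $G$ produces an endomorphism of $\fix\phi_0$ with the same fixed set, so Bergman's and Dicks--Ventura's free-group theorems cannot be applied as you propose. The paper circumvents this by working not with $\fix\phi_0$ but with an image subgroup: after closing $\mathcal B$ under composition and adjoining $id$, it picks $\beta\in\mathcal B$ with $\rk\beta(G)$ minimal; then $\beta\gamma(\beta(G))\leq\beta(G)$ holds automatically, minimality forces each $\beta\gamma|_{\beta(G)}$ to be injective, and Theorem \ref{Dicks-Ventura thm} gives $\rk\fix(\beta\mathcal B)\leq\rk\beta(G)\leq\frac{1}{2}\rk G$. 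Even then one only has the inclusion $\fix\mathcal B\leq\fix(\beta\mathcal B)$, and since subgroup rank in a free group is not monotone, a second nontrivial step is needed: $\fix\mathcal B$ is identified with the equalizer of a family of sections of the epimorphism $\beta:\beta^{-1}(\fix(\beta\mathcal B))\to\fix(\beta\mathcal B)$, and the bound $\rk\fix\mathcal B\leq\rk\fix(\beta\mathcal B)$ comes from Proposition \ref{intersection of retracts of free gp} in the free case and from Proposition \ref{equalizer of endomorphisms of surface group} in the surface-group case, the latter resting on Collins' theorem on intersections of Magnus subgroups (Theorem \ref{intersection of Mugnus subgp}) via the retract analysis of Section 4. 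None of this machinery is replaced by anything in your sketch, so part (2) as written is not a proof.
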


For the geometric subgroups of a surface group, we prove that

\begin{thm} \label{main thm of inert}
If $A$ is a geometric subgroup of a surface group $G$, then $A$ is inert in $G$. Namely, for any subgroup $B$ of $G$, we have
$$\rk(A\cap B)\leq \rk B.$$
\end{thm}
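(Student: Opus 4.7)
My plan is to pass to the cover of $S$ corresponding to $B$, interpret $A \cap B$ as the fundamental group of a component of the preimage of $F$, and then compare Euler characteristics inside a pair of nested compact cores.

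Assume $\rk B < \infty$ and $B \neq 1$ (otherwise the inequality is vacuous or trivial). Let $p\colon (\tilde S, \tilde *) \to (S, *)$ be the connected based cover with $p_* \pi_1(\tilde S, \tilde *) = B$, where $* \in F$. A standard lifting argument -- the disk of a null-homotopy is simply connected, hence lifts -- shows each component of $p^{-1}(F)$ is an incompressible subsurface of $\tilde S$. Let $\tilde F_0$ be the component containing $\tilde *$, so $\pi_1(\tilde F_0) = A \cap B$. By the Soma-type intersection bound cited in the introduction, $A \cap B$ is finitely generated, so $\tilde F_0$ is a surface of finite type. Choose compact connected cores $K$ of $\tilde S$ and $K_0$ of $\tilde F_0$ (so $\pi_1(K) = B$ and $\pi_1(K_0) = A \cap B$), and enlarge $K$ through the annular ends of $\tilde S$ so that $K_0 \subset \mathrm{int}(K)$. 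Transitivity of $\pi_1$-injectivity (a null-homotopy in $K$ lifts first to $\tilde S \simeq K$, then into $\tilde F_0$ by incompressibility, then back to $K_0 \simeq \tilde F_0$) shows that $K_0$ is incompressible in $K$.

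Now I run an Euler-characteristic bookkeeping. If $K_0$ is a disk, then $A \cap B = 1$ and we are done, so assume $K_0$ has non-empty boundary; then $\rk \pi_1(K_0) = 1 - \chi(K_0)$. The decomposition $K = K_0 \cup \overline{K \setminus K_0}$ meeting along $\partial K_0$ (a disjoint union of circles) gives $\chi(K) = \chi(K_0) + \chi(\overline{K \setminus K_0})$. If a component of $\overline{K \setminus K_0}$ were a disk, its boundary circle would lie in $\partial K_0$ (it cannot lie in $\partial K$, since $K$ itself is not a disk) and would be null-homotopic in $K$, hence in $K_0$ by incompressibility, forcing $K_0$ to be a disk -- contradiction. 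Every component of $\overline{K \setminus K_0}$ therefore has non-empty boundary (meeting $\partial K_0$ by connectedness of $K$) and is not a disk, so has non-positive Euler characteristic. Hence $\chi(K_0) \ge \chi(K)$, and
\[
\rk(A \cap B) = 1 - \chi(K_0) \le 1 - \chi(K) \le \rk \pi_1(K) = \rk B,
\]
using $\rk \pi_1(K) = 1 - \chi(K)$ when $K$ has boundary and $\rk \pi_1(K) = 2 - \chi(K)$ when $K$ is closed (i.e.\ when $B$ has finite index in $G$).

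The main technical point is the nested-core arrangement $K_0 \subset \mathrm{int}(K)$, which depends on the finite generation of $A \cap B$ furnished by the cited Soma bound; the rest is routine surface topology.
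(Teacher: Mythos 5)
Your argument is correct and is essentially the paper's own proof (Theorem~\ref{inert of geometric subgp} together with Lemma~\ref{subsurf. gp rank}): pass to the cover of $S$ corresponding to $B$, identify $A\cap B$ with $\pi_1$ of the basepoint component of the preimage of $F$, which is incompressible in the cover, and bound its rank by the Euler--characteristic count $\chi(K)=\chi(K_0)+\chi(\overline{K\setminus K_0})\leq\chi(K_0)$ on nested compact cores. The only substantive addition is your explicit appeal to the Soma/Hanna--Neumann-type bound to guarantee that $A\cap B$ is finitely generated, so that the compact core $K_0$ of $\tilde F_0$ exists --- a point the paper leaves implicit in Case (2) of Lemma~\ref{subsurf. gp rank}.
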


As a corollary, we have

\begin{cor}\label{main corollary}
The fixed subgroup of any family of epimorphisms of a surface group $G$ is inert in $G$.
\end{cor}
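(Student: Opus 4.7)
\emph{Plan.} I would deduce Corollary~\ref{main corollary} from Theorem~\ref{main thm of inert} by first recognizing each individual $\fix\phi$ as a geometric subgroup of $G$, and then propagating inertness to the full intersection $\fix\mathcal B=\bigcap_{\phi\in\mathcal B}\fix\phi$. For the first part, because a closed surface group with $\chi(S)<0$ is co-Hopfian, an epimorphism $\phi$ of $G$ is actually an automorphism, hence induced (up to an inner automorphism) by a homeomorphism $f\colon S\to S$; putting $f$ in a convenient normal form and analyzing its fixed-point behavior along the lines of \cite{JWZ} identifies $\fix\phi$ with the fundamental group of an incompressible subsurface of $S$. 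By Theorem~\ref{main thm of inert} each such $\fix\phi$ is then inert in $G$.

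The next step is the observation that inertness is preserved under finite intersections: if $A,A'\leq G$ are inert and $B\leq G$ is arbitrary, then
$$\rk(A\cap A'\cap B)=\rk\bigl(A\cap(A'\cap B)\bigr)\leq \rk(A'\cap B)\leq \rk B,$$
and induction extends this to every finite subfamily $\mathcal B_0\subseteq \mathcal B$.

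The main obstacle is the passage from the possibly infinite $\mathcal B$ to a finite $\mathcal B_0$ with $\fix\mathcal B_0=\fix\mathcal B$. Theorem~\ref{main thm}(1) guarantees that $\fix\mathcal B$ is finitely generated with $\rk\fix\mathcal B\leq \rk G$, but descending chains of finitely generated subgroups of $G$ need not stabilize on rank alone, so one cannot simply minimize rank. The natural remedy is geometric: regard each $\fix\phi$ as $\pi_1$ of an incompressible subsurface $F_\phi\subseteq S$, and use compactness of $S$ together with the bounded topological complexity of incompressible subsurfaces to realize $\bigcap_{\phi\in\mathcal B}F_\phi$ as the intersection of finitely many $F_{\phi_i}$. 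The corresponding finite subfamily $\mathcal B_0=\{\phi_1,\dots,\phi_n\}$ then satisfies $\fix\mathcal B_0=\fix\mathcal B$, and the preceding step yields the required inertness.
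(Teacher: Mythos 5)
Your first two steps are sound and essentially match the paper: each non-identity epimorphism of $G$ is an automorphism (because $G$ is Hopfian --- note this is the Hopf property, not co-Hopficity, which concerns injective endomorphisms), so Theorem \ref{aut fixed subgp are geometric} makes $\fix\phi$ a geometric free subgroup whenever it is non-cyclic (the cyclic case being trivial for inertness), and inertness does pass to finite intersections exactly as you compute. The genuine gap is the final reduction of an infinite family $\mathcal B$ to a finite subfamily $\mathcal B_0$ with $\fix\mathcal B_0=\fix\mathcal B$. This reduction is not established, and the argument you sketch for it does not work: compactness of $S$ gives no control over infinite intersections of subsurfaces (nested compact incompressible subsurfaces can strictly decrease forever, just as nested closed intervals can shrink to a point --- compactness yields finiteness for covers, not for intersections), and in any case $\bigcap_\phi\fix\phi$ is not $\pi_1\bigl(\bigcap_\phi F_\phi\bigr)$: the subgroups $\fix\phi$ are conjugates of subsurface groups with varying basepoints, and intersections of subsurface groups must be computed via covering spaces (as in the proof of Theorem \ref{inert of geometric subgp}), not by intersecting the subsurfaces inside $S$. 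Since descending chains of finitely generated subgroups of $G$ need not stabilize, some substitute for stabilization is required and is missing.

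The paper avoids this issue entirely. It enumerates $\mathcal B=\{\beta_1,\beta_2,\dots\}$ (possible since $\edo(G)$ is countable), sets $\mathcal B_i=\{\beta_1,\dots,\beta_i\}$, and uses your finite-intersection observation to obtain a descending chain of free groups
$$K\cap\fix\mathcal B_1\geq K\cap\fix\mathcal B_2\geq\cdots$$
with $\rk(K\cap\fix\mathcal B_{i+1})\leq\rk(K\cap\fix\mathcal B_i)\leq\rk K$ for all $i$, by applying Theorem \ref{main thm of inert} to the geometric subgroup $\fix\beta_{i+1}$ at each stage. It then invokes the result of \cite[Exercise 33, p.118]{MKS}: if the intersection of a descending chain of free groups has rank at least $n$, then almost all terms of the chain have rank at least $n$. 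This transfers the rank bound to the infinite intersection $K\cap\fix\mathcal B=\bigcap_i(K\cap\fix\mathcal B_i)$ without ever claiming the chain stabilizes. To repair your proof you should replace the finite-subfamily claim with this chain argument (or else supply an actual proof that a finite subfamily with the same fixed subgroup exists, which is a considerably harder statement that the paper does not assert).
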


The paper is organized as follows. In Section 2, we give some definitions and background knowledge for fixed point theory on surfaces and prove a strong version of Theorem \ref{JWZ thm}. In Section 3, we study the inertia of geometric subgroups of surface groups, and give the proofs of Theorem \ref{main thm of inert} and Corollary \ref{main corollary}. The technology used in this section is the covering theory. In Section 4, we discuss retracts and equalizers of a surface group. These special subgroups play a key role in the proof of our main result which we do in Section 5. At last, we give some examples and questions in Section 6.

%%-------------------------------------------------------------------------------------------------------------------------------------------------------------
%----------------------------------------------------------------------------------------------------------------------------------
\section{The Fixed subgroup of a single endomorphism}

For the fixed subgroup of any single endomorphism of a surface group, we have the following theorem that is not stated but can be obtained from the paper \cite{JWZ}.

\begin{thm}\label{aut fixed subgp are geometric}
Let $G$ be a surface group, $\phi$ a non-identity epimorphism. If $\fix \phi$ is not cyclic, then
$\fix \phi$ is a geometric free subgroup of $G$ with
$$\rk \fix\phi<\rk G.$$
\end{thm}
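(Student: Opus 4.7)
The plan is to realize $\phi$ topologically and identify $\fix\phi$ with the fundamental group of an incompressible fixed subsurface. First, surface groups are Hopfian (being residually finite), so the epimorphism $\phi$ is automatically an automorphism. By the Dehn--Nielsen--Baer theorem, the outer class of $\phi$ is induced by a self-homeomorphism $f:S\to S$, and after choosing a basepoint $x_0$ together with an appropriate connecting path, I may assume $\phi=f_\#$ on $\pi_1(S,x_0)$. Since inner conjugation of $\phi$ does not change whether $\fix\phi$ is cyclic, free, or geometric, nor its rank, this reduction is harmless.

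Next, I would put $f$ in Nielsen--Thurston normal form with respect to a canonical reducing system of disjoint simple closed curves. On each pseudo-Anosov piece $f$ has only isolated fixed points, each contributing at most a cyclic subgroup to $\fix\phi$; on each periodic piece the fixed point set is a disjoint union of points, circles, and, when the periodic action is trivial, the entire piece. Collecting the periodic pieces on which $f$ restricts to the identity together with annular neighborhoods of pointwise-fixed simple closed curves, and passing to essential parts, yields a compact incompressible subsurface $F\subset S$. Using the Nielsen fixed point theory developed in \cite{JWZ}, I would identify $\fix\phi$ (for a basepoint $x_0$ transported into $F$) with the image of the injection $\pi_1(F,x_0)\hookrightarrow\pi_1(S,x_0)$; this immediately gives geometricity.

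Finally, since $\phi\neq id$ the subsurface $F$ is a proper subsurface of $S$, hence has nonempty boundary, and therefore $\pi_1(F)$ is free. The strict rank inequality $\rk\fix\phi<\rk G$ then follows directly from Theorem \ref{JWZ thm}(1). The hypothesis that $\fix\phi$ is not cyclic is used precisely to rule out the degenerate situation in which $F$ is empty or consists of a single annulus, so that no higher-complexity $\pi_1$ of a fixed subsurface is available and $\fix\phi$ would be trivial or infinite cyclic. The main obstacle in executing this plan rigorously is the precise identification $\fix\phi\cong\pi_1(F)$: one must show that non-cyclicity of $\fix\phi$ forces contributions from pseudo-Anosov pieces and from non-identity periodic pieces to be absorbed into isolated cyclic subgroups that do not enlarge the image of $\pi_1(F)$, which requires both the Nielsen--Thurston decomposition and a careful index computation for each essential fixed point class.
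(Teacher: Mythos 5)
Your overall strategy -- use Hopficity to see $\phi$ is an automorphism, realize it by a homeomorphism in Nielsen--Thurston normal form, and identify $\fix\phi$ with $\pi_1$ of an incompressible fixed subsurface -- is exactly the paper's strategy. However, two genuine gaps remain, and they are precisely where the paper does its real work.

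First, your reduction ``after choosing a basepoint $x_0$ together with an appropriate connecting path, I may assume $\phi=f_\#$'' hides the main difficulty. Dehn--Nielsen--Baer (or, as in the paper, homotoping the realizing map to a standard map $f$) only gives you $\phi=f_w$ for some $f$-route $w$, i.e.\ $\phi=[a]\mapsto[w(f\comp a)\overline w]$ for a path $w$ from $x$ to $f(x)$. To replace $w$ by a constant route -- equivalently, to move the basepoint to an actual fixed point of $f$ and write $\phi=f_*$ -- you need the fixed point class $\F_w$ associated to $w$ to be \emph{nonempty}. This is not automatic: a route need not be conjugate to any constant route. The paper closes this gap with \cite[Theorem 3.2]{JWZ} (Theorem \ref{fixed subgp of empty fixed pt class of standard map}): an empty fixed point class of a standard map has rank at most $1$, so the hypothesis that $\fix\phi$ is not cyclic forces $\F_w\neq\emptyset$. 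Your proposal uses non-cyclicity only to ``rule out the degenerate situation in which $F$ is empty or consists of a single annulus,'' which is not the same thing and does not address the empty-class possibility. Note also that composing $\phi$ with an inner automorphism (i.e.\ changing the route within the same outer class but to a non-conjugate route) genuinely changes $\fix\phi$, so this step cannot be waved away as ``harmless.''

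Second, you identify $\fix\phi$ with $\pi_1(F)$ where $F$ is obtained by ``collecting the periodic pieces on which $f$ restricts to the identity together with annular neighborhoods of pointwise-fixed simple closed curves.'' That set is in general disconnected and is the union of \emph{several} fixed point classes. The correct statement, and the one the paper proves, is that $\fix(f_*)=\pi_1(\F_w,*)$ for the \emph{single} fixed point class $\F_w$ containing the (transported) basepoint; this uses the fact that for a standard map every Nielsen path can be deformed rel endpoints into $\fix f$, so that each fixed point class is a connected incompressible subsurface (Lemma \ref{Subsurface}). Identifying $\fix\phi$ with $\pi_1$ of the whole fixed locus would in general overcount. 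Once these two points are repaired, the rest of your argument (nonempty boundary since $\phi\neq id$, hence free; strict rank inequality from Theorem \ref{JWZ thm}(1)) matches the paper.
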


To prove Theorem \ref{aut fixed subgp are geometric}, we need to introduce some facts on fixed points and fixed subgroups of a selfmap of a space.

For a selfmap $f:X\rightarrow X$ of a connected compact polyhedron $X$, the fixed point set
$$\fix f:=\{x\in X|f(x)=x\}$$
splits into a disjoint union of $fixed$ $point$ $classes$: two fixed points are in the same class if and only if they can be joined by a $Nielsen~ path$, which is a path homotopic (rel. its endpoints) to its $f$-image. For each fixed point class $\F$, a homotopy invariant $index$ $\ind(f,\F)\in \Z$ is defined. A fixed point class is $essential$ if its index is non-zero, otherwise, called $inessential$ (see \cite{fp1} for an introduction).

Although there are several approaches to define fixed point classes, we state the one using paths and introduce another homotopy invariant: the $rank$ of a fixed point class $\F$ (see \cite[\S2]{JWZ}).

\begin{defn}
By an $f$-$route$ we mean a homotopy class (rel. endpoints) of path $w:I\rightarrow X$ from a point $x\in X$ to $f(x)$. For brevity we shall often say the path $w$ (in place of the path class $[ w ]$) is an $f$-route at $x=w(0)$. An $f$-route $w$ gives rise to an endomorphism
$$f_{w}:\pi_1(X,x) \rightarrow\pi_1(X,x),~~[ a] \mapsto [ w(f\comp a)\overline w ] $$
where $a$ is any loop based at $x$, and $\overline w$ denotes the reverse of $w$.

Two $f$-routes $[ w ]$ and $[ w' ]$ are $conjugate$ if there is a path $q:I\rightarrow X$ from $x=w(0)$ to $x'=w'(0)$ such that $[ w' ]=[ \overline qw (f\comp q)]$, that is, $w'$ and $\overline qw (f\comp q)$ are homotopic rel. endpoints.
\end{defn}

Note that a constant $f$-route $w$ corresponds to a fixed point $x=w(0)=w(1)$ of $f$, and the endomorphism $f_{w}$ becomes the natural homomorphism induced by $f$,
$$f_*:\pi_1(X,x)\rightarrow \pi_1(X,x),~~[ a]\mapsto  [ f\comp a],$$
where $a$ is any loop based at $x$. Two constant $f$-routes are conjugate if and only if the corresponding fixed points can be joint by a Nielsen path. This gives the following definition.

\begin{defn}
With an $f$-route $w$ (more precisely, with its conjugacy class) we associate a $fixed$ $point$ $class$ $\F_{w}$ of $f$, which consists of the fixed points that correspond to constant $f$-routes conjugate to $w$. Thus fixed point class are associated bijectively with conjugacy classes of $f$-routes. A fixed point class $\F_{w}$ can be empty if there is no constant $f$-route conjugate to $w$. Empty fixed point classes are inessential and distinguished by their associated route conjugacy classes.
\end{defn}

\begin{defn}
For any $f$-route $w$, the fixed subgroup of the endomorphism $f_w$ is the subgroup
$$\fix(f_w)=\{\gamma\in \pi_1(X,w(0))|f_w(\gamma)=\gamma\}.$$
Hence, we have the $rank$ of $\F_w$ defined as
$$\rk(f,\F_w):=\rk\fix(f_w),$$
it is well defined because conjugate $f$-routes have isomorphic fixed subgroups. Moreover, $\rk(f,\F_w)$ of a fixed point class is also a homotopy invariant.
\end{defn}

\vspace{6pt}

According to Nielsen-Thurston's canonical classification theorem of surface homeomorphisms, any homeomorphism of a compact connected surface with negative Euler characteristic is isotopic to either a periodic, pseudo-Anosov or reducible map $f$ (see W. Thurston \cite{T}). Moreover, B. Jiang and J. Guo \cite{JG} stated that such $f$ has a standard form so we call it a $standard$ $map$. A standard map has fine-tuned local behavior and nice properties.

By the complete list of possible types of fixed point classes of a standard map given in  \cite[Lemma 3.6]{JG}, we have

\begin{lem}\label{Subsurface}
Every fixed point class of a standard map of a closed surface with negative Euler characteristic is an incompressible compact connected subsurface (possibly a point or a circle).
\end{lem}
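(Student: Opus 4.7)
The plan is to deduce this lemma directly from the Jiang--Guo classification \cite[Lemma 3.6]{JG} of fixed point classes of standard maps, verifying case by case that each possibility in the list is in fact an incompressible compact connected subsurface of $S$ (where we allow the degenerate subsurfaces of dimension $0$ or $1$ explicitly mentioned in the statement). Since a standard map already comes with a Nielsen--Thurston decomposition of $S$ into a union of invariant subsurfaces (pseudo-Anosov pieces, periodic pieces, and thin reducing annuli) glued along an essential multicurve, the geometric information needed to check incompressibility is built into the definition.

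First I would recall from the discussion preceding the lemma that a standard map has a prescribed local form on each piece of its Nielsen--Thurston decomposition, and that its fixed point set is by construction a disjoint union of pieces of the following three kinds: (a) isolated points (coming from transverse intersections in the pseudo-Anosov pieces, or from isolated fixed points of finite-order rotations in periodic pieces); (b) finitely many simple closed curves (coming from fixed core circles of reducing annuli, or from fixed simple closed curves inside periodic pieces); and (c) full compact subsurfaces on which the map restricts to the identity (the period-$1$ components of the periodic part). These three kinds are exactly the three possibilities allowed by the statement of the lemma.

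Next I would verify that, in each of the three cases, a fixed point class (i.e.\ a maximal union of Nielsen-equivalent fixed points within the same connected component of $\fix f$) is incompressible in $S$. Cases (a) and (b) need only the observation that points are trivially incompressible, and that fixed circles of a standard map are either core curves of reducing annuli or essential simple closed curves living inside a periodic or pseudo-Anosov piece; both kinds are essential in $S$ because the reducing multicurve is essential and the periodic/pseudo-Anosov dynamics admits no null-homotopic invariant loop. In case (c), a fixed subsurface $F$ is a union of components of the Nielsen--Thurston decomposition, so $\partial F$ is a sub-multicurve of the reducing system and therefore $\pi_1(F)\to\pi_1(S)$ is injective.

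The main obstacle is really that case (c): one must rule out the possibility that two distinct identity components accidentally glue across a reducing annulus into one bigger fixed ``class'' that has a compressible boundary or disconnects unexpectedly. This is handled by appealing to the standard-map convention that adjacent identity pieces are separated by a genuinely non-identity annular piece, so that Nielsen equivalence does not merge them; together with Lemma~\ref{Subsurface}'s connectedness requirement and the incompressibility of the reducing curves, this forces each fixed point class to be a single connected, incompressible compact subsurface as claimed.
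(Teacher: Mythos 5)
Your proposal takes essentially the same route as the paper: the paper's entire proof of this lemma is the single observation that it follows from the complete list of possible types of fixed point classes of a standard map in Jiang--Guo \cite[Lemma 3.6]{JG}, with no further argument supplied. Your case-by-case unpacking of that classification (isolated points, invariant circles, and identity subsurfaces bounded by reducing curves) is a reasonable elaboration of the same citation, so the two approaches coincide.
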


\begin{thm}\cite[Theorem 3.2]{JWZ}\label{fixed subgp of empty fixed pt class of standard map}
Let $f: S\to S$ be a standard map of a connected closed surface $S$ with $\chi(S)<0$. Then for any empty fixed point class $\F$, we have
$$\rk(f,\F)\leq 1.$$
\end{thm}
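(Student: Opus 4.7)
The plan is to pass to the universal cover $\widetilde S\cong\mathbb H^2$ and identify $\fix(f_w)$ with the centralizer of a suitable lift of $f$. Concretely, the $f$-route $w$ determines a lift $\widetilde f:\widetilde S\to\widetilde S$; identifying $\pi_1(S,w(0))$ with the deck transformation group, $\fix(f_w)$ becomes the subgroup of deck transformations $\gamma$ with $\widetilde f\gamma=\gamma\widetilde f$. Emptiness of $\F_w$ translates into $\widetilde f$ having no fixed point in the open disk $\widetilde S$.

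First I would extend $\widetilde f$ continuously to the visual boundary $S^1_\infty$ and describe its fixed set there. Because $f$ is a standard map, this extension can be built piecewise along the Jiang--Guo canonical decomposition of $S$: on periodic pieces a compatible hyperbolic structure makes $\widetilde f$ an isometry of $\mathbb H^2$; on pseudo-Anosov pieces $\widetilde f$ is quasi-symmetric with attracting and repelling dynamics on $S^1_\infty$ governed by the invariant foliations; on annular neighborhoods of the reducing curves $\widetilde f$ acts by a Dehn twist along the relevant lifts. A direct check in each case shows that $\mathrm{Fix}(\widetilde f|_{S^1_\infty})$ is a non-empty finite set: the two endpoints of the axis in the hyperbolic isometry case, at most four attractor/repeller points in the pseudo-Anosov case, and endpoints of lifts of the reducing curves in the annular case. (Non-emptiness follows from Brouwer applied to $\overline{\widetilde S}$ once one knows $\widetilde f$ has no interior fixed point.)

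Next, any $\gamma\in\fix(f_w)$ commutes with $\widetilde f$ and therefore preserves the finite set $\mathrm{Fix}(\widetilde f|_{S^1_\infty})$ setwise; this gives a homomorphism from $\fix(f_w)$ into a finite symmetric group whose kernel pointwise stabilizes the set. In the torsion-free Fuchsian group $\pi_1(S)$, the stabilizer of a single boundary point of $\mathbb H^2$ is either trivial or the cyclic group generated by the unique primitive hyperbolic element with that point as an axis endpoint, while the joint stabilizer of two distinct boundary points is either cyclic (when they are the two endpoints of a common axis) or trivial. Hence the pointwise stabilizer is cyclic, and because $\pi_1(S)$ is torsion-free every virtually cyclic subgroup is already cyclic; this forces $\fix(f_w)$ itself to be cyclic, giving $\rk(f,\F_w)=\rk\fix(f_w)\leq 1$.

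The main obstacle is the reducible case: the lift $\widetilde f$ is not an isometry of $\mathbb H^2$, and one must argue piecewise along lifts of the invariant multicurve that no continuum of boundary fixed points develops on the boundary arcs assigned to an invariant piece. Granting this structural verification, which is essentially the content of the Jiang--Guo standard form together with the classification of the three local behaviors above, the boundary-stabilizer argument yields the claimed bound uniformly across all three cases.
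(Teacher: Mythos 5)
Your reduction of $\fix(f_w)$ to the centralizer of the lift $\widetilde f$ in the deck group, the use of Brouwer to get a boundary fixed point once the class is empty, and the group-theoretic endgame (setwise stabilizer of a finite nonempty subset of $S^1_\infty$ in a torsion-free cocompact Fuchsian group is virtually cyclic, hence cyclic) are all sound. But the proof has a genuine gap at its load-bearing step: the assertion that $\mathrm{Fix}(\widetilde f|_{S^1_\infty})$ is \emph{finite}. This is not a routine "structural verification" that can be granted --- it is essentially equivalent to the theorem. For a standard reducible map, a lift that restricts to the identity (or to a deck transformation) on a lifted invariant subsurface $\widetilde F$ with $\pi_1(F)$ non-cyclic fixes the entire limit set of $\stab(\widetilde F)$, a Cantor set, and its centralizer contains the non-cyclic group $\pi_1(F)$; the whole content of the theorem is that this cannot happen when the class is empty, i.e.\ when $\widetilde f$ has no interior fixed point. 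Ruling it out requires exactly the fine local analysis of the standard form near reducing annuli and on the periodic and pseudo-Anosov pieces that you defer, and your piecewise description of the boundary is itself problematic: the limit sets of the pieces interleave densely in $S^1_\infty$, so boundary points are not cleanly "assigned" to pieces, and one cannot choose a single hyperbolic metric making the periodic pieces act by isometries while the pseudo-Anosov pieces retain their foliation dynamics. Even in the pseudo-Anosov case the bound "at most four attractor/repeller points" is unjustified as stated (lifts fixing a $p$-pronged singularity have $2p$ boundary fixed points; one must first use emptiness of the class to exclude these).

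For context: the paper does not prove this statement internally --- it is imported verbatim from \cite[Theorem 3.2]{JWZ} --- and the proof there does not go through boundary dynamics at all. It works directly with the Jiang--Guo list of local types of fixed point classes of a standard map (\cite[Lemma 3.6]{JG}) and with the fact that Nielsen paths of a standard map can be deformed into $\fix f$, analyzing which route classes can be empty and bounding $\fix(f_w)$ case by case. If you want to salvage the boundary-dynamics route, the missing ingredient you must supply is a proof that for a fixed-point-free lift of a standard map the boundary fixed set cannot contain the limit set of a non-cyclic subgroup; without that, the argument only re-states the theorem.
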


\vspace{6pt}

Now we give the proof of Theorem \ref{aut fixed subgp are geometric}.

\begin{proof}[Proof of Theorem \ref{aut fixed subgp are geometric}]

Let $G$ be a surface group, namely, $G=\pi_1(S)$ for some closed surface $S$ with $\chi(S)<0$. Then it is obvious that $\rk \fix \phi<\rk G$ by Theorem \ref{JWZ thm}. Now we show that $\fix\phi$ is a geometric free subgroup.

Note that $S$ is a $K(G, 1)$ space, then the endomorphism $\phi: G\to G$ is induced by a selfmap $g: (S,x)\to(S,x)$ (see \cite[Proposition 1B.9]{Ha}). Namely,
$$\phi=g_*: \pi_1(S,x)\to \pi_1(S,x),~~[a]\mapsto [g\comp a],$$
where $a$ is any loop based at $x\in\fix g$.

Since $\phi$ is epimorphic, it is an automorphism because $G$ is Hopfian. Hence, $g$ can be homotopic to a homeomorphism, even to a standard map $f:(S,x)\to (S,f(x))$, via a homotopy $H=\{h_t\}_{t\in I}$. Then
$$\phi=f_w: \pi_1(S,x)\to \pi_1(S,x),~~[a]\mapsto [w(f\comp a)\bar w],$$
where $w=\{h_t(x)\}_{{t\in I}}$. Therefore, $\fix \phi=\fix(f_w)$.

Note that $\fix(f_w)$ is not cyclic. Then the fixed point class $\F_w$ corresponding to $w$ is not empty according to Theorem \ref{fixed subgp of empty fixed pt class of standard map}. Thus there is a fixed point $*\in \F_w\subseteq \fix f$ that is conjugate to $w$, namely, the loop $\bar qw(f\comp q)$ is homotopic to the point $*$, where $q$ is a path from $x=w(0)$ to $*$. We have the following commutative diagram
$$\xymatrix{
\pi_1(S,x)\ar[d]^{\cong}_{q_{\sharp}}\ar[r]^{f_{w}}& \pi_1(S,x)\ar[d]^{q_{\sharp}}_{\cong}\\
\pi_1(S,*)\ar[r]^{f_*} &\pi_1(S,*)
}$$
where $q_{\sharp}:[a]\mapsto [\bar qaq]$ is an isomorphism. Therefore, under the isomorphism $q_{\sharp}$, we can pick a new presentation $G=\pi_1(S,*)$, and \begin{equation}\label{eq.5}
\phi=f_*:\pi_1(S,*)\to\pi_1(S,*),~~[a]\mapsto [f\comp a],
\end{equation}
where $a$ is any loop based at $*$.

Recall that $f$ is a standard map, then each Nielsen path of $f$ can be deformed (rel. endpoints) into $\fix f$ by \cite[Proof of Corollary T]{JWZ} or \cite[Lemmas 1.2, 2.2 and 3.4]{JG}. Hence every fixed point class is connected, and
$$\fix (f_*)=\pi_1(\F_w, *)\leq \pi_1(S,*),$$
the last inequality holds because the fixed point class $\F_w$ is an incompressible subsurface according to Lemma \ref{Subsurface}. Therefore, the fixed subgroup $\fix (f_*)$ is geometric. Clearly, $\F_w$ is a compact subsurface with nonempty boundary because $\phi\neq id$, hence $\fix (f_*)$ is free. Therefore, $\fix \phi$ is a geometric free subgroup of $G$ by equation (\ref{eq.5}).
\end{proof}

At the end of this section, we give a lemma used frequently in this paper.

\begin{lem}\label{subgp of surface gp}
Let $H$ be a proper subgroup of a surface group $G$ with $\rk H\leq \rk G$. Then $H$ is a free group. Furthermore, if $\phi:G\to G$ is an endomorphism but non-epimorphic, then $\phi(G)$ is a free group with
$$\rk \phi(G)\leq \frac{1}{2}\rk G.$$
\end{lem}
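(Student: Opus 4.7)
The plan is to prove the two claims separately. For the first claim, I pass to the covering $\tilde S \to S$ corresponding to $H \leq G = \pi_1(S)$ and split on whether $[G:H]$ is finite. If $[G:H] = m \geq 2$, then $\tilde S$ is a closed surface with $\chi(\tilde S) = m\,\chi(S)$, and combining this with the identity $\rk \pi_1(\Sigma) = 2 - \chi(\Sigma)$ (valid for every closed surface $\Sigma$ with $\chi(\Sigma)<0$) gives $\rk H = 2 + m(\rk G - 2) > \rk G$, since $\rk G \geq 3$ is forced by $\chi(S)<0$ (rank $\geq 4$ for orientable genus $\geq 2$, rank $\geq 3$ for nonorientable genus $\geq 3$). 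This contradicts $\rk H \leq \rk G$, so $[G:H]=\infty$, the cover $\tilde S$ is noncompact, and therefore $H = \pi_1(\tilde S)$ is free.

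For the second claim, set $H := \phi(G)$. Non-epimorphicity of $\phi$ makes $H$ a proper subgroup, and $\rk H \leq \rk G$ since $\phi$ sends any generating set of $G$ to a generating set of $H$. By the first claim, $H$ is free of some rank $n$, and it remains to show $n \leq \tfrac{1}{2}\rk G$. I would factor $\phi$ through its image as $G \xrightarrow{\pi} H \xrightarrow{\iota} G$, with $\pi$ the (surjective) corestriction and $\iota$ the inclusion, and then examine the induced map on $\Z/2$-cohomology of the aspherical space $S$.

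The core step is as follows. Since $\pi$ is surjective, $\pi^*\colon H^1(H;\Z/2) \hookrightarrow H^1(G;\Z/2)$ is injective, so its image $V$ has dimension exactly $n$ inside the $\rk G$-dimensional space $H^1(G;\Z/2)$. Freeness of $H$ forces $H^2(H;\Z/2)=0$, and naturality of the cup product then yields $\pi^*\alpha \cup \pi^*\beta = \pi^*(\alpha\cup\beta) = 0$ for all $\alpha,\beta \in H^1(H;\Z/2)$, so $V$ is isotropic for the cup product pairing on $H^1(G;\Z/2)$. Poincar\'e duality with $\Z/2$ coefficients (which applies to every closed surface, orientable or not) makes this pairing nondegenerate, and the standard bound $\dim W \leq \tfrac{1}{2}\dim V$ for an isotropic subspace of a nondegenerate bilinear form yields $n \leq \tfrac{1}{2}\rk G$.

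The main technical point, and the reason for working mod $2$ rather than rationally, is the nonorientable case: for $\Sigma = N_k$ one has $H^2(N_k;\Q)=0$, so the rational cup product pairing carries no information, whereas $\Z/2$-Poincar\'e duality supplies the required nondegenerate pairing uniformly across orientable and nonorientable surfaces. Once that coefficient choice is made the proof is a clean combination of covering-space theory for the first claim with the classical cup-product isotropy argument for the second.
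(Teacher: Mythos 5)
Your proof is correct, and the first half follows the paper's route exactly: pass to the cover corresponding to $H$, rule out finite index via the Euler characteristic/rank comparison, and conclude freeness from noncompactness of the infinite cover. For the second half, however, you take a genuinely different path. The paper disposes of the bound $\rk\phi(G)\leq\frac{1}{2}\rk G$ in one line by invoking the \emph{inner rank} of a surface group, $\ir(G)=[\frac{1}{2}\rk G]$, citing Lyndon--Schupp (where it is attributed to Zieschang); your argument instead reproves that bound from scratch: factor $\phi$ through its image, observe that surjectivity of $G\to H$ makes $\pi^*\colon H^1(H;\Z/2)\to H^1(G;\Z/2)$ injective, that freeness of $H$ kills $H^2(H;\Z/2)$ and hence forces the $n$-dimensional image to be isotropic for the cup-product pairing, and that $\Z/2$-Poincar\'e duality makes this pairing nondegenerate, so $n\leq\frac{1}{2}\rk G$ by the standard bound $\dim V\leq\dim V^{\perp}$ for isotropic subspaces. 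All the ingredients check out, including your point that $\Z/2$ coefficients are needed to handle the nonorientable case uniformly (and that $\dim_{\Z/2}H^1(S;\Z/2)=\rk\pi_1(S)$ in both the orientable and nonorientable cases). What the paper's approach buys is brevity via a classical citation; what yours buys is a self-contained, purely topological proof of the same inequality that avoids relying on Zieschang's Nielsen-method computation of the inner rank.
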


\begin{proof}
Let $G=\pi_1(S)$, where $S$ is a closed surface with $\chi(S)<0$. By covering theory of surfaces, the proper subgroup $H$ is either free or $H\cong \pi_1(\widetilde S)$ for some closed surface $\widetilde S$ with $\chi(\widetilde S)/\chi(S)=|G:H|>1$. But the latter implies
$\rk \pi_1(\widetilde S)>\rk\pi_1(S)$ which contradicts to $\rk H\leq \rk G$. Therefore, $H$ is a free group.

If $\phi:G\to G$ is an endomorphism but non-epimorphic, then $\phi(G)<G$ and $\rk\phi(G)\leq \rk G$. Thus $\phi(G)$ is a free group. Moreover, we have
$$\rk \phi(G)\leq \ir(G),$$
where $\ir(G)$ denotes the $inner$ $rank$ of $G$ defined as the maximal rank of free
homomorphic images of $G$. It is known that when $G$ is a surface group, then $\ir(G)=[\frac{1}{2}\rk G]$, the greatest integer not more than $\frac{1}{2}\rk G$. (See Lyndon and Schupp \cite[page 52]{LS} where it is attributed to Zieschang \cite{Z}.) Therefore, we have $\rk \phi(G)\leq \frac{1}{2}\rk G$.
\end{proof}

%------------------------------------------------------------------------------------------------------------------------------------------------------------------
\section{The inertia of geometric subgroups of surface groups}

In this section, we study the inertia of geometric subgroups of surface groups firstly, and then give the proofs of Theorem \ref{main thm of inert} and Corollary \ref{main corollary}.

\subsection{The inertia of geometric subgroups of surface groups}

For the intersection of a subgroup with a geometric subgroup in the fundamental group of a surface, we have

\begin{thm} \label{inert of geometric subgp}
Let $S$ be a connected surface (may has punctures) with $\pi_1(S)$ finitely generated. If the subgroup $A\leq \pi_1(S)$ is geometric, and $B$ is any subgroup of $\pi_1(S)$, then
$$\rk(A\cap B)\leq \rk B.$$
\end{thm}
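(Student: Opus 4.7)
The approach is via covering space theory, as flagged at the start of Section 3. If $B$ is not finitely generated then $\rk B = \infty$ and the inequality is vacuous, so assume $B$ is finitely generated. Choose a basepoint $x_0 \in F$, where $F$ is the incompressible connected subsurface with $A = \pi_1(F, x_0)$, and let $p : (\widetilde S, \widetilde x_0) \to (S, x_0)$ be the connected covering associated to $B$, so that $p_* \pi_1(\widetilde S, \widetilde x_0) = B$ inside $\pi_1(S, x_0)$.

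Let $\widetilde F_0 \subset p^{-1}(F)$ be the component containing $\widetilde x_0$. Then $p|_{\widetilde F_0} : \widetilde F_0 \to F$ is a connected covering, so $\widetilde F_0$ is a connected 2-manifold, and the standard lifting criterion gives $p_* \pi_1(\widetilde F_0, \widetilde x_0) = A \cap B$. The incompressibility of $F \hookrightarrow S$ lifts to incompressibility of $\widetilde F_0 \hookrightarrow \widetilde S$. The problem thus reduces to the following topological statement: if $\widetilde F_0$ is an incompressible connected subsurface of a connected surface $\widetilde S$ with finitely generated $\pi_1$, then $\rk \pi_1(\widetilde F_0) \leq \rk \pi_1(\widetilde S)$.

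For this reduced claim, I would pass to compact cores $K_0 \subset K$ of $\widetilde F_0 \subset \widetilde S$, arranged so that $K \hookrightarrow \widetilde S$ and $K_0 \hookrightarrow \widetilde F_0$ are $\pi_1$-isomorphisms while $K_0 \hookrightarrow K$ remains $\pi_1$-injective. On a compact surface one has $\rk \pi_1 = 1 - \chi$ when the boundary is non-empty and $\rk \pi_1 = 2 - \chi$ when closed, so the rank bound becomes an Euler-characteristic comparison along the decomposition $K = K_0 \cup (K \setminus \intt K_0)$. The decisive observation is that incompressibility forbids any disk component of $K \setminus \intt K_0$ whose boundary circle lies in $\partial K_0 \cap \intt K$: such a disk caps off a null-homotopic simple closed curve in $K$; by incompressibility that curve is also null-homotopic in $K_0$ and hence bounds a disk there, and the two disks assemble into an embedded $S^2 \subset K$, which is impossible unless $K = S^2$ (a trivial case). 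Bookkeeping the resulting $\chi$ inequality in both the closed and bordered cases then delivers the required rank bound.

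The main obstacle is precisely this rank-comparison lemma, where the closed-versus-bordered split in the rank-$\chi$ formula demands careful case analysis and one must control all possible configurations of complementary disk components. A secondary issue is verifying that $A \cap B$ is finitely generated so that the compact cores $K_0 \subset K$ are actually available; this follows either from the Howson property of surface groups or, equivalently, from the classification of incompressible subsurfaces of finite-type surfaces.
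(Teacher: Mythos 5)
Your proposal is correct and follows essentially the same route as the paper: pull back the incompressible subsurface $F$ under the covering associated to $B$, identify $A\cap B$ with $\pi_1$ of the basepoint component of $p^{-1}(F)$, and then invoke a rank comparison for incompressible subsurfaces. Your ``reduced claim'' is precisely the paper's Lemma~\ref{subsurf. gp rank}, which is proved there by the same device (compact cores, additivity of $\chi$, and incompressibility ruling out disk components of the complement).
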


\vspace{6pt}
Before giving the proof of Theorem \ref{inert of geometric subgp}, we need the lemma below. For brevity, a subsurface means it is connected unless it is specially stated otherwise.

\begin{lem}\label{subsurf. gp rank}
Suppose $S$ is a connected surface with $\pi_1(S)$ finitely generated, and $F\subseteq S$ is an incompressible subsurface. Then $\rk \pi_1(F)\leq \rk\pi_1(S)$. In particular, if $S$ is closed, then $\pi_1(F)$ is either $\pi_1(S)$ itself or a free group with
$$\rk \pi_1(F)<\rk \pi_1(S).$$
\end{lem}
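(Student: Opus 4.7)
The plan is to reduce the lemma to a short Euler-characteristic calculation, using incompressibility to control the complementary subsurface. First I would set $F' := \overline{S \setminus F}$ and observe that $F \cap F' = \partial F$ is a disjoint union of circles, so has $\chi = 0$; inclusion--exclusion then yields the clean identity $\chi(S) = \chi(F) + \chi(F')$.

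The next step is to establish $\chi(F') \leq 0$ via incompressibility. A sphere component of $F'$ is ruled out immediately by connectedness of $S$, since it would be a component of $S$ disjoint from $F$. A disk component $D \subseteq F'$ would have boundary circle $c \subseteq \partial F$ null-homotopic in $S$; incompressibility of $F$ would force $c$ to be null-homotopic in $F$ as well, but as a boundary component of the connected surface $F$ this happens only if $F$ is itself a disk. In that degenerate case $\pi_1(F) = 1$ and the conclusion is immediate, so otherwise every component of $F'$ has $\chi \leq 0$, giving $\chi(F') \leq 0$ and hence $\chi(F) \geq \chi(S)$.

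To convert this bound into a rank bound, I would invoke the standard rank formulas for surface groups. When $S$ is not closed, $\pi_1(S)$ is free of rank $1 - \chi(S)$; by connectedness of $S$ a closed subsurface would have to equal $S$, so either $F=S$ and there is nothing to prove, or $F$ has nonempty boundary, $\pi_1(F)$ is free of rank $1 - \chi(F) \leq 1 - \chi(S) = \rk \pi_1(S)$. When $S$ is closed, the identity $\rk \pi_1(S) = 2 - \chi(S)$ holds for every closed surface (verified case by case: torus, Klein bottle, higher-genus orientable, and non-orientable). If $F$ is a proper incompressible subsurface then it has nonempty boundary, so $\pi_1(F)$ is free, and $\rk \pi_1(F) = 1 - \chi(F) \leq 1 - \chi(S) = \rk \pi_1(S) - 1 < \rk \pi_1(S)$; the alternative $F=S$ gives the other case of the ``in particular'' clause.

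The main obstacle is carefully dispatching the degenerate cases---a disk component of $F'$, or $F$ being a disk itself---since these are where incompressibility must be invoked most delicately; the rest is bookkeeping with the Euler characteristic and the fact that a non-closed subsurface has free fundamental group.
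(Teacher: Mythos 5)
Your argument is essentially the paper's own proof of its Case (1): push $\partial F$ off $\partial S$, use $\chi(S)=\chi(F)+\chi(S-\intt F)$, rule out disk components of the complement by incompressibility (after disposing of the case where $F$ itself is a disk), and convert $\chi(F)\ge\chi(S)$ into the rank inequality via $\rk\pi_1=1-\chi$ for non-closed surfaces and $\rk\pi_1=2-\chi$ for closed ones. That part is correct and matches the paper step for step.

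The genuine gap is that you only treat the compact situation. The lemma's hypotheses allow $S$ to be noncompact (it is invoked in Theorem 3.1 for surfaces ``with punctures'') and allow $F$ to be noncompact as well, and there your bookkeeping breaks down: a noncompact subsurface has no reason to meet its complement in a finite disjoint union of circles, the identity $\chi(S)=\chi(F)+\chi(F')$ is no longer a two-line inclusion--exclusion (there may be infinitely many frontier components, or noncompact components of $F'$ whose Euler characteristics you have not controlled), and the fact that a null-homotopic boundary circle forces $F$ to be a disk is a statement about compact surfaces. The paper handles this as a separate Case (2): it replaces $F$ by a compact core $C_F$ and chooses a compact core $C_S$ of $S$ containing $C_F$ (citing Scott's Lemma 1.5), so that $\pi_1(C_F)=\pi_1(F)$, $\pi_1(C_S)=\pi_1(S)$, and incompressibility is preserved, reducing everything to the compact case. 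Your proposal needs this reduction (or an equivalent device) to cover the stated generality.
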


\begin{proof}
If $F$ is closed, then $S$ must be closed and $\pi_1(F)=\pi_1(S)$. Now we suppose $F$ is neither closed nor a disk, then $\pi_1(F)$ is a free group. There are two cases:

Case (1).  Both $F$ and $S$ are compact. Via a slightly push of $\partial F$ into $\intt S$, we can assume that $\partial F\cap \partial S=\emptyset$. Then each component of $\partial (S-\intt F)$ is a circle which is either contained in $\partial F$ or $\partial S$. Recall that $F$ is incompressible and not a disk, then no component of $S-\intt F$ is a disk. In fact, if there is a disk $D$, then $\partial D\subseteq \partial F$ which contradicts to that the natural homomorphism $\pi_1(F)\to \pi_1(S)$ is injective. Thus the Euler characteristic $\chi(S-\intt F)\leq 0$ and $$\chi(S)=\chi(F)+\chi(S-\intt F)\leq \chi (F).$$
Therefore, $\rk \pi_1(F)\leq \rk\pi_1(S)$ when $S$ is not closed, and $\rk \pi_1(F)<\rk\pi_1(S)$ when $S$ is closed.

Case (2).  At least one of $F$ and $S$ is not compact. If $F$ is not compact, pick a core $C_F$ of $F$, which is a compact subsurface of $F$ such that each component of $F-C_F$ is an open annulus; if $F$ is compact, set $C_F=F$. Moreover, we can choose a compact core $C_S$ of $S$ such that $C_F\subseteq C_S$ (see \cite[Lemma 1.5]{S}). Thus $\pi_1(C_F)=\pi_1(F)$, $\pi_1(C_K)=\pi_1(K)$ and the natural homomorphism $\pi_1(C_F)\to \pi_1(C_S)$ is also injective. The conclusion holds by Case (1).
\end{proof}

\begin{proof}[Proof of Theorem \ref{inert of geometric subgp}]
If $B$ is infinitely generated (i.e., $\rk B=\infty$) or $A\cap B=\{1\}$, it is trivial. Thus we assume $B$ is finitely generated and $A\cap B\neq \{1\}$ below.

Since $A$ is geometric, there is an incompressible subsurface $F\subseteq S$ such that $A=\pi_1(F,*)\leq \pi_1(S,*)$ for some base point $*\in F$. We have two maps:
the inclusion $i: (F,*)\hookrightarrow (S,*)$, and the covering $k: (K,*_K)\to (S,*)$ associated to $B$, namely, $k_*(\pi_1(K,*_K))=B$.
Consider the commutative diagram:
$$\xymatrix{
(F_0,*_0)\subseteq (\tilde F,*_0)\ar[d]_{p'}\ar[r]^<<<<<{i'}\ar[dr]^p& (K,*_K)\ar[d]^k\\
(F,*)\ar@{^{(}->}[r]^i &(S,*)
}$$
where
$$\tilde F=\{(x,y)\in F\times K|i(x)=k(y)\},$$
$p: \tilde F\to S$ is the pull back map such that
$$p((x,y))=i(x)=k(y),$$
and $F_0$ is the component of $\tilde F$ containing the base point $*_0=(*,*_K)$. Since $i:F\hookrightarrow S$ is an inclusion, $\tilde F$ can be identified as $k^{-1}(F)$, and $p':\tilde F\to F$ can be identified as the covering $k|_{k^{-1}(F)}:k^{-1}(F)\to F$. Thus $F_0$ is a connected subsurface of $K$, and by the commutative diagram, $i'_*:\pi_1(F_0)\to \pi_1(K)$ is injective since $p': F_0\to F$ is a covering.
Therefore, $F_0$ is incompressible in $K$. By Lemma \ref{subsurf. gp rank}, we have
\begin{equation}\label{eq.1}
\rk\pi_1(F_0)\leq \rk \pi_1(K)=\rk B.
\end{equation}
Moreover, $p_*:\pi_1(F_0)\to \pi_1(S)$ is also injective according to the commutative diagram, and we have
$$p_*(\pi_1(F_0))\leq i_*(\pi_1(F))\cap k_*(\pi_1(K))=A\cap B.$$

Now we claim that
\begin{equation}\label{eq.2}
p_*(\pi_1(F_0))=A\cap B.
\end{equation}

To prove the claim, it suffices to prove $A\cap B\leq p_*(\pi_1(F_0))$. In fact, any nontrivial element $a\in A\cap B$ can be represented by a loop $\alpha \subset F\subseteq S$ containing the base point $*$. Since $[\alpha]=a\in A\cap B$, there is a lifting loop $\tilde \alpha \subset K$ containing the base point $*_K$. Therefore there is a loop $\tilde \alpha_0\subset F_0$ containing the base point $*_0$ and $p(\tilde \alpha_0)=\alpha$, which implies $A\cap B\leq p_*(\pi_1(F_0))$. Thus the claim holds.

Therefor we have
$$\rk(A\cap B)=\rk p_*(\pi_1(F_0))=\rk \pi_1(F_0)\leq \rk B,$$
where the first equality holds by equation (\ref{eq.2}), the second equality holds because $p_*$ is injective, and the last inequality holds by equation (\ref{eq.1}).
\end{proof}

\subsection{Proofs of Theorem \ref{main thm of inert} and Corollary \ref{main corollary}}

It is obvious that Theorem \ref{main thm of inert} follows from Theorem \ref{inert of geometric subgp}. Now we give the proof of Corollary \ref{main corollary}.

\begin{proof}[Proof of Corollary \ref{main corollary}]
Let $G$ be a surface group, namely, $G\cong \pi_1(S)$ for a closed surface $S$ with $\chi(S)<0$, and $\mathcal B$ a family of epimorphisms of $G$. Suppose $K\leq G$ is any subgroup of $G$.

If $\mathcal B=\{id\}$, or some $\beta\in \mathcal B$ has $\fix \beta$ cyclic, then $\rk(K\cap\fix\mathcal B)\leq \rk K$ is obvious.

Now we suppose $\mathcal B\neq\{id\}$ and $\fix \beta$ is not cyclic for all $\beta\in \mathcal B$. Then by Theorem \ref{aut fixed subgp are geometric}, for any non-identity epimorphism $\beta\in \mathcal B$, $\fix\beta$ is a geometric free subgroup with $\rk \fix\beta<\rk G$. Hence without loss of generality, we assume $id\not\in \mathcal B$ in the following.

Note that $G$ is finitely generated, then $\edo(G)$, the set of all endomorphisms of $G$, is countable and hence $\mathcal B\subseteq \edo(G)$ is also countable. Set $\mathcal B=\{\beta_1,\beta_2,\ldots\}$ and $\mathcal B_i=\{\beta_1,\ldots,\beta_i\}, i=1,2,\ldots$ (If $\mathcal B$ has a finite cardinality $n$, set $\beta_j=\beta_n$ and $\mathcal B_j =\mathcal B_n$ for all $j>n$). Then we have a descending chain of fixed subgroups
$$\fix\beta_1=\fix \mathcal B_1\geq \fix \mathcal B_2\geq\ldots\geq\fix \mathcal B_i\geq\ldots$$
whose terms are all free groups. Furthermore, we have a descending chain of free groups
$$K\cap\fix\beta_1=K\cap\fix \mathcal B_1\geq K\cap \fix \mathcal B_2\geq\ldots\geq K\cap\fix \mathcal B_i\geq\ldots.$$
Note that $K\cap\fix \mathcal B_{i+1}=K\cap\fix \mathcal B_i\cap \fix \beta_{i+1}$ and $\fix\beta_{i+1}$ is geometric in $G$ for all $i\geq 1$, by Theorem \ref{main thm of inert}, we have
$\rk(K\cap\fix \mathcal B_{i+1})\leq\rk(K\cap\fix \mathcal B_i)$.
Thus
$$\rk(K\cap\fix\mathcal \beta_1)=\rk(K\cap\fix \mathcal B_1)\geq \rk(K\cap\fix \mathcal B_2)\geq\ldots\geq\rk(K\cap\fix \mathcal B_i)\geq\ldots.$$
Note that
$$K\cap\fix\mathcal B=K\cap(\bigcap_{i=1}^{\infty} \fix\mathcal B_i)=\bigcap_{i=1}^{\infty} (K\cap\fix\mathcal B_i),$$
we have
$$\rk (K\cap\fix\mathcal B)=\rk(\bigcap_{i=1}^{\infty} (K\cap\fix\mathcal B_i))\leq \rk(K\cap\fix\beta_1)\leq\rk K$$
where the first inequality is according to \cite[Exercise 33, p.118]{MKS} that if the intersection of a descending chain of free groups has rank $\geq n$, then almost all terms of the chain have rank $\geq n$, and the second inequality holds since $\fix\beta_1$ is geometric in $G$. It implies that $\fix\mathcal B$ is inert.
\end{proof}

%------------------------------------------------------------------------------------------------------------------------------------------------------------------
\section{Equalizers and retracts}

In this section, we study the equalizers and retracts of surface groups.

\subsection{Introduction to equalizers and retracts}

Suppose $G$ and $H$ are two groups, $\phi: G\to H$ is an epimorphism. A $section$ of $\phi$ is a homomorphism $\sigma: H\to G$ such that
$$\phi\comp \sigma=id: H\to H.$$
Then for any family $\mathcal{B}$ of sections of $\phi$, the $equalizer$ of $\mathcal B$
$$\eq(\mathcal B):=\{h\in H|\sigma_1(h)=\sigma_2(h), \forall \sigma_1,\sigma_2\in \mathcal B\}$$
is a subgroup of $H$.

Suppose $H$ is a subgroup of a group $G$. If there is a homomorphism $\pi: G\to G$ such that $\pi(G)\leq H$ and
$$\pi|_H=id: H\to H,$$
we say that $\pi$ is a $retraction$, and $H$ is a $retract$ of $G$. We have $\rk H\leq\rk G$ obviously. Moreover, if $H$ is a proper subgroup, it is called a $proper$ $retract$. Note that if a retract $H$ of $G$ is contained in a subgroup $K\leq G$, then $H$ is also a retract of $K$. Hence $\rk H\leq \rk K$.

\vspace{6pt}

The following is a relation between equalizers and retracts.

\begin{lem}\label{equalizer & retract}
Let $G, H$ be two groups and $\phi:G\to H$ an epimorphism. If $\mathcal B$ is a family of sections of $\phi$. Then for any section $\sigma\in \mathcal B$, $\sigma(H)$ is a retract of $G$, and
$$\sigma|_{\eq(\mathcal B)}: \eq(\mathcal B)\to \bigcap_{\alpha\in \mathcal B}\alpha (H)$$
is an isomorphism.
\end{lem}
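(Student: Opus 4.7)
The plan is to verify both assertions by exploiting the natural composition $\sigma\circ\phi:G\to G$ and the defining identity $\phi\circ\alpha=\mathrm{id}_H$ satisfied by every section $\alpha\in\mathcal B$. For the first claim, I would set $\pi:=\sigma\circ\phi$ and check directly that it is a retraction onto $\sigma(H)$: surjectivity of $\phi$ yields $\pi(G)=\sigma(H)$, and for any $h\in H$ we have $\pi(\sigma(h))=\sigma(\phi(\sigma(h)))=\sigma(h)$, so $\pi$ restricts to the identity on $\sigma(H)$.

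For the isomorphism statement, the argument splits naturally into well-definedness, injectivity, and surjectivity. Well-definedness is immediate: if $h\in\eq(\mathcal B)$ then $\sigma(h)=\alpha(h)\in\alpha(H)$ for every $\alpha\in\mathcal B$, so $\sigma(h)\in\bigcap_{\alpha\in\mathcal B}\alpha(H)$. Injectivity of $\sigma|_{\eq(\mathcal B)}$ is then automatic, because $\sigma$ itself is already injective: the section identity $\phi\circ\sigma=\mathrm{id}_H$ exhibits $\phi$ as a left inverse.

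Surjectivity is the only step that requires a small argument, and it is where I expect the content of the proof to live. Given $g\in\bigcap_{\alpha\in\mathcal B}\alpha(H)$, for each $\alpha\in\mathcal B$ choose some $h_\alpha\in H$ with $\alpha(h_\alpha)=g$. Applying $\phi$ and using $\phi\circ\alpha=\mathrm{id}_H$ forces $h_\alpha=\phi(\alpha(h_\alpha))=\phi(g)$, so every $h_\alpha$ coincides with the single canonical element $h:=\phi(g)\in H$. This simultaneously gives $\alpha(h)=g$ for all $\alpha\in\mathcal B$, which both places $h$ in $\eq(\mathcal B)$ and verifies $\sigma(h)=g$. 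I do not foresee any genuine obstacle here: the lemma is essentially a bookkeeping identity, and the whole argument hinges on the observation that the section property pins down the preimage of $g$ under every $\alpha$ to be the uniquely determined element $\phi(g)\in H$.
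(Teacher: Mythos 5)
Your proof is correct and follows essentially the same route as the paper: the retraction is exhibited as $\sigma\circ\phi$, and surjectivity onto $\bigcap_{\alpha\in\mathcal B}\alpha(H)$ is obtained by applying $\phi$ to $g=\alpha(h_\alpha)$ to pin down $h_\alpha=\phi(g)$ for every $\alpha$, whence $\phi(g)\in\eq(\mathcal B)$ and $\sigma(\phi(g))=g$. No gaps.
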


\begin{proof}
For any $\sigma\in \mathcal B$, $\sigma: H\to G$ is a section of $\phi: G\to H$ implies
$$\phi\comp\sigma=id: H\to H,$$
and hence $\sigma\comp \phi: G\to \sigma(H)$
is an epimorphism such that
$$(\sigma\comp \phi)(\sigma (h))=\sigma(\phi(\sigma(h)))=\sigma(h)$$
for any $h\in H$. Therefore, $\sigma(H)$ is a retract of $G$.

Clearly $\sigma$ is injective and $\sigma(\eq(\mathcal B))\leq \bigcap_{\alpha\in \mathcal B}\alpha (H)$. To prove $\sigma|_{\eq(\mathcal B)}$ is an isomorphism, it suffices to show
$$\sigma(\eq(\mathcal B))= \bigcap_{\alpha\in \mathcal B}\alpha (H).$$
In fact, for any $g\in  \bigcap_{\alpha\in \mathcal B}\alpha (H)$ and any $\alpha\in\mathcal B$, there exists $h_{\alpha}\in H$ such that $g=\alpha(h_{\alpha})$.
Hence $\phi(g)=\phi(\alpha(h_{\alpha})=h_{\alpha}$, which implies $\phi(g)\in \eq(\mathcal B)$ and hence $g=\sigma(\phi(g))\in \sigma(\eq(\mathcal B))$.
\end{proof}

For equalizers and retracts of finitely generated free groups, G. Bergman gave the following results (see \cite[Corallary 12 and Lemma 18]{B}).

\begin{prop}[Bergman]\label{intersection of retracts of free gp}
{\rm(1)} Any intersection of retracts of a finitely generated free group is also a retract;

{\rm(2)} If $\phi: G\to H$ is an epimorphism of free groups with $H$ finitely generated, then the equalizer of any family of sections of $\phi$ is a free factor in $H$.
\end{prop}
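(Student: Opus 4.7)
The plan is to prove (1) by constructing a single idempotent whose image equals the given intersection, then derive (2) via Lemma~\ref{equalizer & retract} together with Bergman's companion result that every retract of a finitely generated free group is a free factor.

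For (1), recall that $H\le G$ is a retract iff $H=\pi(G)=\fix\pi$ for some idempotent $\pi\in\edo G$. First reduce to two retracts: since $G$ is countable, every intersection of retracts agrees with a countable intersection, and if $A'\subsetneq A$ are both retracts of $G$ then $A'$ is also a retract of $A$, whereupon a Hopfian argument (an idempotent surjective endomorphism of a finitely generated free group, being surjective with Hopfian target, must be the identity) forces $\rk A'<\rk A$. Hence the descending chain of partial intersections strictly decreases in rank between distinct terms and stabilizes after at most $\rk G$ steps, so it suffices to intersect two retracts $H_1,H_2$ with idempotents $\pi_1,\pi_2$. The main task is to exhibit an idempotent $\rho$ with $\rho(G)=H_1\cap H_2$. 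Bergman's construction builds $\rho$ as the stabilized iterate of a carefully chosen composition of $\pi_1$ and $\pi_2$, stabilization being guaranteed by the rank bound from Dicks--Ventura (Theorem~\ref{Dicks-Ventura thm}). Producing this $\rho$, rather than merely bounding $\rk(H_1\cap H_2)$, is the main obstacle.

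For (2), Lemma~\ref{equalizer & retract} gives an isomorphism $\sigma|_{\eq\mathcal B}\colon\eq\mathcal B\to K:=\bigcap_{\alpha\in\mathcal B}\alpha(H)$, and exhibits each $\alpha(H)$ as a retract of $G$. By (1), $K$ is a retract of $G$; fix a retraction $\rho\colon G\to K$. Consider
$$r:=\phi\circ\rho\circ\sigma\colon H\to H.$$
Since $K\subseteq\sigma(H)$ and $\phi|_{\sigma(H)}=\sigma^{-1}$, we have $\phi(K)=\sigma^{-1}(K)=\eq\mathcal B$, so $r(H)\subseteq\eq\mathcal B$. For $h\in\eq\mathcal B$, Lemma~\ref{equalizer & retract} places $\sigma(h)$ in $K$, which is fixed by $\rho$, yielding $r(h)=\phi(\sigma(h))=h$. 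Thus $\eq\mathcal B$ is a retract of $H$, and Bergman's theorem that retracts of finitely generated free groups are free factors concludes the proof.
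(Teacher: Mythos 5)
The paper offers no proof of this proposition at all: it is imported wholesale from Bergman's paper (\cite[Corollary 12 and Lemma 18]{B}), so your attempt has to stand on its own, and it has two genuine gaps. In part (1) the reduction to two retracts is fine, but the heart of the matter --- exhibiting an idempotent endomorphism with image $H_1\cap H_2$ --- is exactly the step you defer to ``Bergman's construction'' and yourself label the main obstacle; nothing you write pins it down, and the natural reading (stabilize an iterate of a composite of $\pi_1$ and $\pi_2$) is simply wrong. For instance, in $F(a,b)$ take $H_1=\langle a\rangle$ with $\pi_1\colon a\mapsto a,\ b\mapsto 1$, and $H_2=\langle ab\rangle$ with $\pi_2\colon a\mapsto ab,\ b\mapsto 1$: both composites $\pi_1\pi_2$ and $\pi_2\pi_1$ are already idempotent, with images $\langle a\rangle$ and $\langle ab\rangle$ respectively, while $H_1\cap H_2=1$. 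Bergman's actual proof of Corollary 12 goes through his machinery of supports of derivations and is not recovered by iterating the given retractions.

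In part (2) your computation showing that $r=\phi\circ\rho\circ\sigma$ retracts $H$ onto $\eq(\mathcal B)$ is correct (granting (1)), and ``retract of $H$'' already gives $\rk\eq(\mathcal B)\le\rk H$, which is all that Section 5 of the paper actually uses. But the stated conclusion is that $\eq(\mathcal B)$ is a \emph{free factor} of $H$, and the ``theorem'' you invoke to upgrade retract to free factor --- that every retract of a finitely generated free group is a free factor --- is false. In $F(a,b)$ the endomorphism $a\mapsto w:=a^2ba^{-1}b^{-1}$, $b\mapsto 1$ is idempotent with image $\langle w\rangle$, so $\langle w\rangle$ is a retract; yet $w$ is cyclically reduced of length $5$ with abelianization $(1,0)$, hence not conjugate to $a^{\pm1}$ and therefore not primitive (a primitive element of $F_2$ is determined up to conjugacy by its image in the abelianization), so $\langle w\rangle$ is not a free factor. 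The free-factor statement is precisely the content of Bergman's Lemma 18 and needs its own argument. A smaller but real mismatch: your derivation of (2) applies (1) to the group $G$, which (1) requires to be finitely generated, whereas (2) assumes only that $H$ is finitely generated --- and in the paper's application the domain $E=\beta^{-1}(\fix(\beta\mathcal B))$ is typically an infinitely generated free group, so the case you actually need is not covered by your reduction.
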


\subsection{Some facts on surface groups}

In this subsection, we introduce some facts on surface groups.

Let $S$ be a closed surface of genus $g$. It is well known that $\pi_1(S)$ has a standard presentation:
$$\pi_1(S)=\langle a_1,b_1,\ldots,a_g,b_g|\prod_{i=1}^g [a_i, b_i]\rangle,~\quad or \quad \pi_1(S)=\langle a_1,a_2,\ldots,a_g|\prod_{i=1}^g a_i^2\rangle$$
according to whether $S$ is orientable or not.

For any generating set $X=\{x_1,\ldots, x_n\}\subset \pi_1(S)$, let $F_X = \langle y_1, . . . , y_n\rangle$ be a free group with one generator for each element of $X$ and denote the natural map $F_X\to \pi_1(S)$ by $\phi_X$. Two generating sets $X$ and $X'$ of the same cardinality are $Nielsen$ $equivalent$
if there is an isomorphism $\epsilon: F_{X'}\to F_X$ such that the following diagram commutes.
$$
\xymatrix{
F_{X'}  \ar[rr]^{\epsilon} \ar[dr]_{\phi_{X'}}
                &  &    F_X \ar[dl]^{\phi_X}    \\
                & \pi_1(S)                 }
$$
%$\phi_X=\phi_{X'}\comp \epsilon$.

In the paper \cite{Z2}, H. Zieschang showed that for any generating set $X\subset\pi_1(S)$ with cardinality $|X|=\rk \pi_1(S)$ for a closed orientable surface $S$ of genus not 3 is Nielsen equivalent to the standard generating set. In \cite{L}, L. Louder generalized the result to any closed surface of any genus whether it is orientable or not.

For standard generating set $X'$ of $\pi_1(S)$, the kernel of $\phi_{X'}$ is the normal closure of a word $w$ in $F_{X'}$. Thus for any generating set $X$ of $\pi_1(S)$ with $|X|=\rk \pi_1(S)$,  the kernel of $\phi_{X}$ is the normal closure of a word $r=\epsilon(w)$ in $F_{X}$. Moreover, we can let $r$ be cyclically reduced. Hence we have

\begin{lem}\label{Nielsen equav. of surface group}
Let $S$ be a closed surface, and $n=\rk\pi_1(S)$. If $X=\{x_1, x_2,\ldots, x_n\}$ is any generating set of $\pi_1(S)$, then $\pi_1(S)$ has a new one-relator presentation
$$\pi_1(S)=\langle x_1, x_2,\ldots, x_n|r \rangle,$$
where $r$ is a cyclically reduced word in the free group on the generating set $X$.
\end{lem}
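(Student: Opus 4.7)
The plan is to reduce to the standard presentation of $\pi_1(S)$ via Nielsen equivalence, and then cyclically reduce the resulting relator. Concretely, since $|X|=\rk\pi_1(S)$, the deep theorem of Zieschang (for closed orientable surfaces) and its extension by Louder (to all closed surfaces) cited just above the statement tells us that $X$ is Nielsen equivalent to the standard generating set $X'$. This is exactly the tool we need, and is the heaviest ingredient invoked in the argument.

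Next I would unwrap what Nielsen equivalence gives. By definition, there is an isomorphism $\epsilon:F_{X'}\to F_X$ of free groups such that $\phi_X\circ\epsilon=\phi_{X'}$. In particular $\epsilon(\ker\phi_{X'})=\ker\phi_X$. Since $S$ is closed, the standard presentation of $\pi_1(S)$ writes $\ker\phi_{X'}$ as the normal closure $\langle\!\langle w\rangle\!\rangle$ in $F_{X'}$ of a single word $w$ (the standard surface relator, either $\prod[a_i,b_i]$ or $\prod a_i^2$). Applying the isomorphism $\epsilon$ yields
$$\ker\phi_X=\epsilon\bigl(\langle\!\langle w\rangle\!\rangle\bigr)=\langle\!\langle \epsilon(w)\rangle\!\rangle$$
as a normal subgroup of $F_X$. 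Setting $r_0:=\epsilon(w)$, the first isomorphism theorem gives the one-relator presentation $\pi_1(S)=\langle x_1,\dots,x_n\mid r_0\rangle$.

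Finally I would replace $r_0$ by a cyclically reduced word $r$ representing the same normal closure. In the free group $F_X$, every element is conjugate to a unique cyclically reduced word, so write $r_0=u\,r\,u^{-1}$ with $r$ cyclically reduced and $u\in F_X$. Conjugate elements in a group have the same normal closure, hence $\langle\!\langle r_0\rangle\!\rangle=\langle\!\langle r\rangle\!\rangle$ in $F_X$, and therefore
$$\pi_1(S)=\langle x_1,x_2,\dots,x_n\mid r\rangle$$
with $r$ cyclically reduced, as required.

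The main obstacle is really just the citation to Zieschang--Louder: once we have Nielsen equivalence of $X$ with a standard generating set in hand, everything else is routine manipulation of free groups and normal closures. No surface topology is needed after that point, and the cyclic reduction step is purely a choice of representative in a conjugacy class.
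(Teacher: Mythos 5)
Your proposal is correct and follows essentially the same route as the paper: invoke Zieschang--Louder to get Nielsen equivalence of $X$ with the standard generating set, transport the normal closure of the standard relator through the isomorphism $\epsilon$ to obtain a one-relator presentation on $X$, and then pass to a cyclically reduced conjugate of the relator. The only difference is that you spell out the routine normal-closure and cyclic-reduction steps that the paper leaves implicit.
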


Let $G=\langle X|r\rangle$ be a one-relator group where $r$ is a cyclically reduced word in the free group on the generating set $X$. A subset $Y\subset X$
is called a $Magnus$ $subset$ if $Y$ omits a generator which appears in the relator $r$. A subgroup $H$ of $G$ is called a
$Magnus$ $subgroup$ if $H=\langle Y\rangle$ for some Magnus subset $Y$ of $X$, and hence by the Magnus Freiheitssatz \cite[Theorem 4.10]{MKS}, $H$ is free of rank $|Y|$. There were many studies (\cite{Br}\cite{C}\cite{C2}\cite{Ho}, etc) on intersections of Magnus subgroups. In particular, D. Collins showed that

\begin{thm}\cite[Theorem 2]{C}\label{intersection of Mugnus subgp}
The intersection $\langle Y \rangle \cap\langle Z\rangle$ of two Magnus subgroups of the
one-relator group $G$ is either $\langle Y \cap Z\rangle$ or the free product of $\langle Y \cap Z\rangle$ with
an infinite cyclic group and thus of rank $|Y \cap Z| + 1$.
\end{thm}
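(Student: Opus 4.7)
The plan is to proceed by induction on the length of the relator $r$, exploiting the Magnus hierarchy (the rewriting machinery that underlies the Freiheitssatz). The easy case is when some generator $x\in X$ appearing in $r$ lies in neither $Y$ nor $Z$: then both $\langle Y\rangle$ and $\langle Z\rangle$ are contained in the Magnus subgroup $\langle X\setminus\{x\}\rangle$, which is free by the Freiheitssatz. In a free group, subgroups generated by two subsets of a common free basis intersect in the subgroup generated by the intersection of the two subsets (a routine Stallings-foldings fact), so $\langle Y\rangle\cap\langle Z\rangle=\langle Y\cap Z\rangle$, giving the first alternative of the theorem.

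In the main case every generator of $r$ lies in $Y\cup Z$, and after renaming I may assume $Y$ omits a generator $a$ appearing in $r$, while $Z$ omits a distinct generator $b$ also appearing in $r$. Here I would use the standard Magnus trick: pass, if necessary, to a finite-index subgroup $G_n\leq G$ in which some distinguished generator has zero exponent-sum in the relator, so that $G_n$ decomposes as an HNN extension whose vertex group is a one-relator group $G'$ with \emph{strictly shorter} relator. Under this decomposition $\langle Y\rangle\cap G_n$ and $\langle Z\rangle\cap G_n$ become conjugates of Magnus subgroups of $G'$, and the inductive hypothesis applied in $G'$ gives an intersection which is either $\langle Y\cap Z\rangle$ or $\langle Y\cap Z\rangle\ast\mathbb Z$. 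Lifting back to $G$ via Bass--Serre theory on the tree of the HNN decomposition, the intersection $\langle Y\rangle\cap\langle Z\rangle$ can differ from the intersection computed inside $G'$ only by a single element associated to a fixed edge of the tree, which is exactly what produces the free factor $\mathbb Z$ of rank one and the overall rank $|Y\cap Z|+1$.

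The principal obstacle will be the bookkeeping in the HNN step: one has to identify precisely which pair of Magnus subgroups in the smaller one-relator group $G'$ the original pair descends to, distinguish when the stable letter (or a conjugate of it) genuinely contributes a new generator to the intersection versus when it collapses into $\langle Y\cap Z\rangle$, and then reverse the exponent-sum passage using that $G/G_n$ is cyclic. Proving that these are the \emph{only} two possibilities --- that no more than one extra generator can appear --- is the sharp part of the statement and is where the combinatorics of Magnus--Karrass--Solitar rewriting really has to be used; everything else reduces, step by step, to the free-group calculation of the base case.
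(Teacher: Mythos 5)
This statement is not proved in the paper at all: it is Collins's theorem, quoted verbatim from \cite{C} as an external input, so there is no in-paper argument to compare against. Judged on its own terms, your outline does follow the skeleton of the argument that actually appears in the literature (induction along the Magnus--Moldavanskii hierarchy, with the Freiheitssatz handling the base case), and your ``easy case'' --- both $Y$ and $Z$ omitting a common generator occurring in $r$, so that everything lives in a free Magnus subgroup and $\langle Y\rangle\cap\langle Z\rangle=\langle Y\cap Z\rangle$ --- is correct.

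The main case, however, contains a genuine gap rather than a proof. First, a directional error: when no generator has zero exponent sum in $r$, the standard trick embeds $G$ with finite index \emph{into} a larger one-relator group (by adjoining a root of a generator); there is no finite-index subgroup $G_n\leq G$ playing this role. Second, and more seriously, the assertion that $\langle Y\rangle$ and $\langle Z\rangle$ ``become conjugates of Magnus subgroups of $G'$'' under the HNN splitting is false as stated: a Magnus subgroup of $G$ containing the stable letter, or containing generators that get rewritten as infinite families of $t$-conjugates, is not contained in (a conjugate of) the vertex group, and its intersection with the other Magnus subgroup must be analyzed through the action on the Bass--Serre tree using the normal form for HNN extensions. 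That analysis --- showing that the two subgroups can share at most one ``exceptional'' element beyond $\langle Y\cap Z\rangle$, i.e.\ that at most one extra infinite cyclic free factor appears --- is precisely the content of the theorem, and your sketch defers it entirely (``is where the combinatorics \ldots really has to be used''). Since the quantitative bound of one extra $\mathbb{Z}$ is the whole point, what you have is a correct plan of attack, not a proof; completing it requires the detailed rewriting and double-coset arguments that occupy most of Collins's paper.
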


\subsection{Equalizers and retracts on surface groups}

Now we consider equalizers and retracts of surface groups, which will play a key role in the proof of Theorem \ref{main thm}.

\begin{lem}\label{intersection of two retracts}
Let $G$ be a surface group. If $K$ is any proper retract of $G$, then $K$ is a free group with rank
$$\rk K\leq \frac{1}{2}\rk G.$$

Furthermore, if $H_1,H_2$ are two proper retracts of $G$, and $H=\langle H_1, H_2\rangle\leq G$, the subgroup generated by $H_1$ and $H_2$, then

{\rm(1)} If $H<G$, then $H$ is a free group, $H_1\cap H_2$ is a retract of both $H_1$ and $H_2$, and
$$\rk(H_1\cap H_2)\leq \min\{\rk H_1,\rk H_2\}.$$

{\rm(2)} If $H=G$, then $H_1\cap H_2$ is cyclic (possibly trivial).
\end{lem}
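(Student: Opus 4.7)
The plan is to deduce the single-retract statement directly from Lemma \ref{subgp of surface gp}, and then split the second statement along whether $H<G$ or $H=G$: the former can be handled using Bergman's free-group Proposition \ref{intersection of retracts of free gp}(1) once $H$ is shown to be free, while the latter requires the one-relator / Magnus-subgroup machinery of Lemma \ref{Nielsen equav. of surface group} and Theorem \ref{intersection of Mugnus subgp}.

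For the first assertion, writing $\pi\colon G\to G$ for a retraction fixing $K$ pointwise, one has $K=\pi(G)$, and if $K$ is proper then $\pi$ is non-epimorphic, so Lemma \ref{subgp of surface gp} immediately gives $K$ free with $\rk K\leq \tfrac{1}{2}\rk G$. In case (1), this first assertion gives both $H_1$ and $H_2$ free of rank at most $\tfrac{1}{2}\rk G$, so $\rk H\leq \rk H_1+\rk H_2\leq \rk G$, and a second application of Lemma \ref{subgp of surface gp} (to the proper subgroup $H$) forces $H$ to be free. The key observation is that for each retraction $\pi_i\colon G\to H_i$ the restriction $\pi_i|_H$ has image $\pi_i(H)\leq \pi_i(G)\leq H_i\leq H$ and fixes $H_i$ pointwise, exhibiting $H_i$ as a retract of the finitely generated free group $H$; Proposition \ref{intersection of retracts of free gp}(1) then yields a retraction $H\twoheadrightarrow H_1\cap H_2$, whose restriction to $H_i$ is again a retraction onto $H_1\cap H_2$, giving the rank bound.

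The real work is in case (2). When $H=G$, the rank inequalities tighten to $\rk H_1=\rk H_2=\tfrac{1}{2}\rk G$, and choosing a basis $X_i$ of the free group $H_i$, a rank count shows that $X_1\cup X_2$ is a generating set of $G$ of cardinality exactly $\rk G$, forcing $X_1$ and $X_2$ to be disjoint in $G$. Lemma \ref{Nielsen equav. of surface group} then supplies a cyclically-reduced one-relator presentation $G=\langle X_1\cup X_2\mid r\rangle$. Because a closed surface group with $\chi<0$ is one-ended and hence freely indecomposable, $r$ must involve every generator in $X_1\cup X_2$; consequently each $X_i$ omits some generator appearing in $r$ (any element of the other $X_j$), so $H_i=\langle X_i\rangle$ is a Magnus subgroup in this presentation. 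Collins's Theorem \ref{intersection of Mugnus subgp} applied to the disjoint Magnus subsets $X_1,X_2$ then concludes that $H_1\cap H_2$ is either trivial or infinite cyclic.

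The main obstacle I anticipate is the translation step in case (2) that recasts the two retracts $H_1,H_2$ as Magnus subgroups of a one-relator presentation of $G$: one must combine the rank count (that $X_1\sqcup X_2$ is a minimum-cardinality generating set of $G$, so that Lemma \ref{Nielsen equav. of surface group} applies) with free indecomposability of closed hyperbolic surface groups to force the relator $r$ to involve every generator — the step that actually certifies $X_1$ and $X_2$ as Magnus subsets so that Theorem \ref{intersection of Mugnus subgp} can be invoked.
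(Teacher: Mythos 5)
Your proposal is correct and follows essentially the same route as the paper: Lemma \ref{subgp of surface gp} for the single-retract bound, Bergman's Proposition \ref{intersection of retracts of free gp}(1) applied inside the free group $H$ when $H<G$, and the one-relator presentation from Lemma \ref{Nielsen equav. of surface group} together with Collins's Theorem \ref{intersection of Mugnus subgp} when $H=G$. The only difference is that you explicitly justify, via free indecomposability of closed surface groups with $\chi<0$, why the relator $r$ must involve every generator so that $X_1$ and $X_2$ are genuinely Magnus subsets --- a step the paper asserts without comment, so your addition is a welcome clarification rather than a deviation.
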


\begin{proof}
Since $K$ is a proper retract of the surface group $G$, there is an endomorphism $\beta: G\to G$ such that $\beta(G)=K<G$ and $\beta|_K=id$. By Lemma \ref{subgp of surface gp}, $K$ is a free group with $\rk K\leq \frac{1}{2}\rk G$.

Furthermore,  since $H_1$ and $H_2$ are two proper retracts of $G$ and $H=\langle H_1, H_2\rangle$, we have
\begin{equation}\label{eq.6}
\rk H\leq \rk H_1+\rk H_2\leq \rk G.
\end{equation}

There are two cases.

Case (1).  $H<G$. Then $H$ is a free group by Lemma \ref{subgp of surface gp}. Note that $H_1$ and $H_2$ are both retracts of the free group $H$, then $H_1\cap H_2$ is also a retract of $H$ according to Proposition \ref{intersection of retracts of free gp}. It implies
$$\rk(H_1\cap H_2)\leq \min\{\rk H_1,\rk H_2\}.$$

Case (2).  $H=G$. Let $X_i$ be a generating set of $H_i$, $i=1,2$. Then $X_1\cup X_2$ is a generating set of $G$, moreover $|X_1\cup X_2|=\rk G$ and $X_1\cap X_2=\emptyset$ by equation (\ref{eq.6}). Thus by Lemma \ref{Nielsen equav. of surface group}, $G$ has a one-relator presentation
$$G=\langle X_1\cup X_2|r\rangle$$
where $r$ is a cyclic reduced word in the free group on the generating set $X_1\cup X_2$. It implies that both $X_1$ and $X_2$ are Magnus subset and hence $H_1$ and $H_2$ are both Magnus subgroup of $G$. Therefore, the intersection $H_1\cap H_2$ is a cyclic (possibly trivial) subgroup of $G$ according to Theorem \ref{intersection of Mugnus subgp}.
\end{proof}

\begin{prop}\label{intersection of a family of retracts of surface gp}
Let $G$ be a surface group and $\mathcal R$ a family of retracts of $G$. Then
$$\rk(\bigcap_{H\in \mathcal R}H)\leq \min\{\rk H|H\in \mathcal R\}\leq \left\{\begin{array}{cc}
\rk G,     & \mathcal R=\{G\} \\
\frac{1}{2}\rk G, & \mathcal R\neq \{G\}
\end{array}\right..\nonumber$$
\end{prop}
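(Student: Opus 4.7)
The plan is to localise the whole intersection inside a minimum-rank proper retract $H_0\in\mathcal R$ and then appeal to Bergman's Proposition \ref{intersection of retracts of free gp}(1) about retracts of finitely generated free groups.

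If $\mathcal R=\{G\}$ there is nothing to prove, so I may assume $\mathcal R$ contains a proper retract of $G$. By Lemma \ref{intersection of two retracts} every proper retract of $G$ has rank at most $\frac{1}{2}\rk G<\rk G$, so $\min\{\rk H\mid H\in\mathcal R\}$ is attained by some proper retract $H_0\in\mathcal R$ and is automatically $\leq\frac{1}{2}\rk G$. Since $K:=\bigcap_{H\in\mathcal R}H\subseteq H_0$, only the inequality $\rk K\leq\rk H_0$ remains to be established.

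I rewrite $K=\bigcap_{H\in\mathcal R}(H\cap H_0)$ and classify the factors. When $H=G$ or $H=H_0$ the factor is $H_0$ itself, which is trivially a retract of $H_0$. When $H$ is any other member of $\mathcal R$ it is a proper retract of $G$, and Lemma \ref{intersection of two retracts} applied to $H$ and $H_0$ offers a clean dichotomy: either $\langle H,H_0\rangle<G$, in which case $H\cap H_0$ is a retract of $H_0$, or $\langle H,H_0\rangle=G$, in which case $H\cap H_0$ is cyclic. Partition $\mathcal R=\mathcal R_1\sqcup\mathcal R_2$ according to whether $H\cap H_0$ is a retract of $H_0$ or only known to be cyclic. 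If $\mathcal R_2=\emptyset$, then $K$ is an intersection of retracts of the finitely generated free group $H_0$, so Proposition \ref{intersection of retracts of free gp}(1) makes $K$ itself a retract of $H_0$ and hence $\rk K\leq\rk H_0$. If $\mathcal R_2\neq\emptyset$, then $K$ sits inside a cyclic subgroup of $H_0$ and is therefore itself cyclic with $\rk K\leq 1\leq\rk H_0$, the corner case $H_0=\{1\}$ forcing $K=\{1\}$ directly.

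The main conceptual step is organising the dichotomy from Lemma \ref{intersection of two retracts} so that the free-group machine of Proposition \ref{intersection of retracts of free gp} takes over. Two mild technical points deserve attention: the cyclic-boundary case when $\rk H_0=0$, handled separately as noted above, and the fact that Bergman's result must apply to a possibly infinite family of retracts. The latter is exactly the form in which Proposition \ref{intersection of retracts of free gp}(1) is stated; in any case $\edo(G)$ is countable so $\mathcal R$ is at most countable, and a descending-chain argument analogous to the one in the proof of Corollary \ref{main corollary} would also work as a fallback.
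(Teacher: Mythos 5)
Your proposal is correct and follows essentially the same route as the paper: reduce to a minimum-rank proper retract $H_0$, rewrite the intersection as $\bigcap_{H}(H\cap H_0)$, use the dichotomy of Lemma \ref{intersection of two retracts} (retract of $H_0$ versus cyclic), and invoke Bergman's Proposition \ref{intersection of retracts of free gp}(1) inside the free group $H_0$. The only cosmetic difference is that you organise the case split around pairs $\langle H,H_0\rangle$ rather than arbitrary pairs $\langle H,H'\rangle$, and you flag the trivial corner case $\rk H_0=0$, which the paper leaves implicit.
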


\begin{proof}
For any proper retract $H\in \mathcal R$, $H$ is a free group with rank
$$\rk H\leq \frac{1}{2}\rk G<\rk G$$
according to Lemma \ref{intersection of two retracts}. Therefore, it suffices to assume that $\mathcal R$ consists of proper retracts in the following.
There are two cases.

Case (1).  There exist two retracts $H, H'\in \mathcal R$ such that $G=\langle H,H'\rangle$, then $H\cap H'$ is cyclic by Lemma \ref{intersection of two retracts}. Note that $\bigcap_{H\in \mathcal R}H$ is a subgroup of the cyclic group $H\cap H'$, we have $\bigcap_{H\in \mathcal R}H$ is also cyclic, which implies
$$\rk(\bigcap_{H\in \mathcal R}H)\leq 1\leq \min\{\rk H|H\in\mathcal R\}.$$

Case (2).  For any two retracts $H,H'$, $\langle H,H'\rangle<G$. Let $H_0\in \mathcal R$ be the retract which has the minimal rank in $\mathcal R$, namely,
$$\rk H_0=\min\{\rk H|H\in \mathcal R\}.$$
By Lemma \ref{intersection of two retracts}, $\{H_0\cap H|H\in \mathcal R\}$ is a family of retracts of the free group $H_0$. Therefore
$$\bigcap_{H\in \mathcal R}H=H_0\cap(\bigcap_{H\in \mathcal R} H)=\bigcap_{H\in \mathcal R}(H_0\cap H)$$
is a retract of $H_0$ according to Proposition \ref{intersection of retracts of free gp}. Hence
$$\rk(\bigcap_{H\in \mathcal R}H)\leq \rk H_0\leq \min\{\rk H|H\in \mathcal R\}.$$
The proof is finished.
\end{proof}

\begin{prop}\label{equalizer of endomorphisms of surface group}
Let $G$ be a surface group and $F$ a finitely generated free group. If $\phi: G\to F$ is an epimorphism, and $\mathcal B$ is a family of sections of $\phi$, then
$$\rk\eq(\mathcal B)\leq \rk F\leq\frac{1}{2}\rk G.$$
\end{prop}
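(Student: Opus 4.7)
The plan is to deduce the proposition directly by combining Lemma \ref{equalizer & retract} with Proposition \ref{intersection of a family of retracts of surface gp}, plus the inner rank estimate already used in the proof of Lemma \ref{subgp of surface gp}.

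First I would dispatch the easy cases: if $\mathcal B$ is empty or a singleton, the defining condition for $\eq(\mathcal B)$ is vacuous, so $\eq(\mathcal B)=F$ and the first inequality $\rk\eq(\mathcal B)\le \rk F$ is automatic. Otherwise, I apply Lemma \ref{equalizer & retract} with $H=F$: any fixed section $\sigma\in\mathcal B$ restricts to an isomorphism
$$\sigma|_{\eq(\mathcal B)}\colon \eq(\mathcal B)\longrightarrow \bigcap_{\alpha\in\mathcal B}\alpha(F),$$
and each $\alpha(F)$ is a retract of $G$. Moreover, every section $\alpha\colon F\to G$ is injective (since $\phi\comp\alpha=id_F$), so $\alpha(F)\cong F$ and $\rk\alpha(F)=\rk F$. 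Feeding the family $\mathcal R=\{\alpha(F):\alpha\in\mathcal B\}$ of retracts of the surface group $G$ into Proposition \ref{intersection of a family of retracts of surface gp} yields
$$\rk\eq(\mathcal B)=\rk\!\bigcap_{\alpha\in\mathcal B}\alpha(F)\le \min_{\alpha\in\mathcal B}\rk\alpha(F)=\rk F,$$
which is the first inequality.

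For the second inequality $\rk F\le \tfrac12\rk G$, I would invoke the inner rank argument recalled in the proof of Lemma \ref{subgp of surface gp}: since $\phi\colon G\to F$ is an epimorphism onto the free group $F$, $F$ is a free homomorphic image of the surface group $G$, hence $\rk F\le \ir(G)=[\tfrac12\rk G]\le \tfrac12\rk G$ by the Zieschang computation of the inner rank of a surface group.

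There is no real obstacle; all the heavy lifting has been done in Section~4. The only minor point to be careful about is verifying that $\alpha(F)$ is genuinely a retract of $G$ (not merely a free subgroup of $G$), which is exactly the content of the first half of Lemma \ref{equalizer & retract}, and that the sections are injective so that the minimum of the ranks in Proposition \ref{intersection of a family of retracts of surface gp} equals $\rk F$.
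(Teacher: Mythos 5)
Your proof is correct and follows essentially the same route as the paper: apply Lemma \ref{equalizer & retract} to identify $\eq(\mathcal B)$ with $\bigcap_{\alpha\in\mathcal B}\alpha(F)$, a family of (proper) retracts of $G$, and then invoke Proposition \ref{intersection of a family of retracts of surface gp}. The only cosmetic difference is that you derive $\rk F\le\tfrac12\rk G$ directly from Zieschang's inner-rank computation, whereas the paper reads it off the $\tfrac12\rk G$ bound in Proposition \ref{intersection of a family of retracts of surface gp}; both rest on the same underlying fact.
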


\begin{proof}
For any section $\sigma\in \mathcal B$, $\sigma(F)<G$ since $\sigma(F)$ is isomorphic to the free group $F$. By Lemma \ref{equalizer & retract}, we have an isomorphism
$$\sigma|_{\eq(\mathcal B)}: \eq(\mathcal B)\to \bigcap_{\alpha\in \mathcal B}\alpha(F)$$
where $\{\alpha(F)|\alpha\in \mathcal B\}$ is a family of proper retracts of $G$. Therefore, the conclusion holds according to Proposition \ref{intersection of a family of retracts of surface gp}.
\end{proof}

%----------------------------------------------------------------------------------------------------------------------------------
\section{Proof of Theorem \ref{main thm}}

The aim of this section is to prove Theorem \ref{main thm}.

\begin{proof}[\textbf{Proof of Theorem \ref{main thm}}]
Suppose $G$ is a surface group and $\mathcal B$ is a family of endomorphisms of $G$. There are two cases.

\vspace{6pt}
Case (1).  $\mathcal B$ consists of epimorphisms. Then $\fix \mathcal B$ is inert in $G$ by Corollary \ref{main corollary}, and we have
$$\rk \fix \mathcal B=\rk (G\cap\fix \mathcal B)\leq \rk G,$$
and when $\mathcal B=\{id\}$, the equality holds obviously. Moreover, if $\mathcal B\neq \{id\}$, then there is a non-identity epimorphism $\beta\in \mathcal B$ with $\rk\fix \beta<\rk G$ according to Theorem \ref{aut fixed subgp are geometric}, and we have
$$\rk \fix \mathcal B=\rk(\fix \beta\cap\fix \mathcal B)\leq \rk \beta< \rk G.$$

\vspace{6pt}

Case (2).  $\mathcal B$ contains a non-epimorphic endomorphism.

The proof of this case is partly inspired by G. Bergman's paper \cite{B}. Without loss of generality, we assume that $\mathcal B$ is closed under composition and contains the identity endomorphism.
Recall that $\mathcal B$ contains a non-epimorphic endomorphism, we can choose $\beta\in \mathcal B$ such that $\beta(G)$ is a free group with
$$\rk(\beta(G))=\min\{\rk(\gamma(G))|\gamma\in\mathcal B\}\leq \frac{1}{2}\rk G$$
according to Lemma \ref{subgp of surface gp}. Thus all elements of $\mathcal B$ act injective on $\beta (G)$. Indeed, if there is $\gamma\in \mathcal B$ acts not injective on $\beta (G)$, then $\rk(\gamma\beta(G))<\rk(\beta(G))$ contradicts to the minimality of $\rk(\beta(G))$. Let $\beta\mathcal B=\{\beta\gamma|\gamma\in \mathcal B\}$. Note that $\beta\gamma(\beta(G))\leq \beta(G)$, thus we have a family $\beta\mathcal B|_{\beta(G)}$ of injective endomorphisms of the free group $\beta(G)$,
$$\beta\gamma|_{\beta(G)}: \beta(G)\to \beta(G).$$
Since $\fix(\beta\mathcal B)=\fix(\beta\mathcal B|_{\beta(G)})\leq \beta(G)$, for brevity, we omit the restriction if no confusion is possible. Therefore, by Theorem \ref{Dicks-Ventura thm}, we have
\begin{equation}\label{eq.3}
\rk\fix(\beta\mathcal B)\leq \rk(\beta(G))\leq\frac{1}{2}\rk G.
\end{equation}

Clearly, $\fix\mathcal B$ is a subgroup of the free group $\fix(\beta\mathcal B)$. Now we claim that

\begin{claim}\label{claim}
$\rk\fix\mathcal B\leq\rk\fix(\beta\mathcal B)$.
\end{claim}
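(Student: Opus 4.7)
The plan is to realise $\fix\mathcal B$ as an intersection of retracts inside a suitable ambient group and then invoke the retract results of Section 4. Since $\mathcal B$ is closed under composition and contains $id$, for every $\gamma\in\mathcal B$ the composition $\beta\gamma$ lies in $\mathcal B$, so $\beta\gamma$ acts as the identity on $K:=\fix(\beta\mathcal B)$. For each finite subfamily $\mathcal B_0\subseteq\mathcal B$ containing $id$, I would set
$$L_{\mathcal B_0}:=\langle\gamma(K)\mid\gamma\in\mathcal B_0\rangle\leq G.$$
Then $\beta$ restricts to an epimorphism $\beta|_{L_{\mathcal B_0}}\colon L_{\mathcal B_0}\to K$ (because $\beta(\gamma(K))=\beta\gamma(K)=K$), and for each $\gamma\in\mathcal B_0$ the map $\gamma|_K\colon K\to L_{\mathcal B_0}$ is an injective section of it. Lemma \ref{equalizer & retract} then identifies the equalizer of this family of sections, which visibly equals $\fix\mathcal B_0$, with $\bigcap_{\gamma\in\mathcal B_0}\gamma(K)$, and shows that each $\gamma(K)$ is a retract of $L_{\mathcal B_0}$.

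Next I would exploit the structure of subgroups of surface groups: since $L_{\mathcal B_0}$ is finitely generated, covering theory (as used in Lemma \ref{subgp of surface gp}) forces it to be either a finitely generated free group or a finite-index closed surface subgroup of $G$. In the free case, Proposition \ref{intersection of retracts of free gp}(1) gives that $\bigcap_{\gamma\in\mathcal B_0}\gamma(K)$ is a retract of $L_{\mathcal B_0}$; since $K$ is itself a retract of $L_{\mathcal B_0}$ (take $\gamma=id$) and contains the intersection, restricting this retraction to $K$ shows the intersection is also a retract of $K$, hence its rank is at most $\rk K$. In the surface case, Proposition \ref{intersection of a family of retracts of surface gp} applies with $L_{\mathcal B_0}$ playing the role of the ambient surface group; since every $\gamma|_K$ is injective we have $\rk\gamma(K)=\rk K$, so that proposition again yields $\rk\bigcap_{\gamma\in\mathcal B_0}\gamma(K)\leq\rk K$.

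Finally, to remove the finiteness assumption on $\mathcal B$, I would write $\fix\mathcal B=\bigcap_{\mathcal B_0}\fix\mathcal B_0$ as a descending intersection over the directed system of finite subfamilies $\mathcal B_0\subseteq\mathcal B$ containing $id$. Each $\fix\mathcal B_0$ is a subgroup of the free group $K$, hence a finitely generated free group of rank at most $\rk K$; the descending-chain exercise from \cite[Exercise 33, p.118]{MKS} used in the proof of Corollary \ref{main corollary} then yields $\rk\fix\mathcal B\leq\rk K=\rk\fix(\beta\mathcal B)$. The main subtlety I anticipate is that $L_{\mathcal B_0}$ need not be free --- a priori it could be a finite-index closed surface subgroup of $G$ --- but the surface-group retract result of Proposition \ref{intersection of a family of retracts of surface gp} was established precisely to cover this possibility, so the dichotomy poses no real obstacle.
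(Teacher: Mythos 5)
Your argument is in substance the same as the paper's: both realise $\fix\mathcal B$ as the equalizer of the family of sections $\{\gamma|_{K}\}$ of $\beta$ restricted to a suitable subgroup of $G$, and then split according to whether that subgroup is free or a surface group, invoking Bergman's retract results (Proposition \ref{intersection of retracts of free gp}) in the first case and Proposition \ref{intersection of a family of retracts of surface gp} (equivalently, Proposition \ref{equalizer of endomorphisms of surface group}) in the second. The only structural difference is the choice of ambient subgroup: the paper uses $E=\beta^{-1}(K)$, while you use $L_{\mathcal B_0}=\langle\gamma(K)\mid\gamma\in\mathcal B_0\rangle\leq E$; this changes nothing essential. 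Two small corrections. First, the equalizer of $\{\gamma|_K\}_{\gamma\in\mathcal B_0}$ is $K\cap\fix\mathcal B_0$, not $\fix\mathcal B_0$: for a proper finite subfamily $\mathcal B_0$ there is no reason for $\fix\mathcal B_0$ to be contained in $K$, so your closing paragraph ("each $\fix\mathcal B_0$ is a subgroup of the free group $K$") is not literally correct. This is harmless, because the rank bound your equalizer argument actually establishes is for $K\cap\fix\mathcal B_0$, and $\bigcap_{\mathcal B_0}(K\cap\fix\mathcal B_0)=K\cap\fix\mathcal B=\fix\mathcal B$, so the limiting step still yields the claim once rephrased. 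Second, the detour through finite subfamilies and the descending-chain exercise of \cite{MKS} is avoidable: in the free case you can apply part (2) of Proposition \ref{intersection of retracts of free gp} to the epimorphism $\beta|_{L}\colon L\to K$, which only requires the \emph{target} $K$ to be finitely generated, so the full (possibly infinite) family can be handled at once even when $L$ is an infinitely generated free group; this is exactly how the paper treats $E$ in one stroke.
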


\noindent\emph{Proof.}
 Let
$$E=\beta^{-1}(\fix(\beta \mathcal B))\leq G,$$
then there is an epimorphism
$$\beta: E\to \fix(\beta \mathcal B),$$
and a family of sections $\mathcal B|_{\fix(\beta \mathcal B)}$ of $\beta$
$$\gamma|_{\fix(\beta \mathcal B)}:\fix(\beta \mathcal B)\to E,~~ \forall \gamma\in \mathcal B.$$
Note that $\fix \mathcal B\leq \fix(\beta \mathcal B)$ and $\mathcal B$ contains the identity (and hence $\mathcal B|_{\fix(\beta \mathcal B)}$ contains the identity), then
\begin{equation}\label{eq.4}
\fix\mathcal B=\fix(\mathcal B|_{\fix(\beta \mathcal B)})=\eq(\mathcal B|_{\fix(\beta \mathcal B)}).
\end{equation}

Recall that $E$ is a subgroup of the surface group $G$, then $E$ is either free or isomorphic to a surface group.
If $E$ is a free group, then by Proposition \ref{intersection of retracts of free gp}, $\eq(\mathcal B|_{\fix(\beta \mathcal B)})$ is a free factor of $\fix(\beta \mathcal B)$, and hence $\rk\eq(\mathcal B|_{\fix(\beta \mathcal B)})\leq \rk\fix(\beta \mathcal B)$; if $E$ is a surface group, then we also have $\rk\eq(\mathcal B|_{\fix(\beta \mathcal B)})\leq \rk\fix(\beta \mathcal B)$ according to Proposition \ref{equalizer of endomorphisms of surface group}. Thus by equation (\ref{eq.4}), Claim \ref{claim} holds.

\vspace{6pt}
Therefore, by equation (\ref{eq.3}) and Claim \ref{claim}, we have
$$\rk\fix\mathcal B\leq\frac{1}{2}\rk G$$
and the proof is finished.
\end{proof}

%--------------------------------------------------------------------------------------------------------------------------------------------
\section{Examples and Questions}

In this section, we give some examples and questions on surface groups.

The example below shows that the fundamental group of a torus also satisfies the conclusion of Theorem \ref{main thm}.

\begin{exam}
Let $G=\langle a,b|a^{-1}b^{-1}ab\rangle\cong \Z\oplus \Z$, and $\phi$ an endomorphism of $G$. It is well known that any subgroup of $G$ is also abelian with rank $\leq 2$. Thus
$$\rk \fix\phi\leq \rk G.$$

Now we claim that $\fix\phi$ is a cyclic group (possibly trivial) when $\phi\neq id$.

Indeed, pick a basis $a=(1,0)$ and $b=(0,1)$ of $G$. Then $G=\{(u,v)|u,v\in \Z\}$, and
$\phi$ can be presented as a $2\times 2$ matrix $A$ with integral entries
$$\phi(x)=xA,\quad  \forall x=(u,v)\in G.$$
If $\rk\fix\phi=2$, then there are two non-parallel vectors $x_1, x_2\in \fix\phi$ such that $x_1A=x_1$ and $x_2A=x_2$. For any $x\in G$, suppose
$x=kx_1+lx_2$, $k,l\in \Q$, it implies
$$xA=(kx_1+lx_2)A=kx_1A+lx_2A=kx_1+lx_2=x.$$
Namely, $\phi=id$. Therefore, the claim holds.
\end{exam}

The example below shows that the fundamental group of a Klein bottle has a nonidentity automorphism with fixed subgroup of rank 2, hence it does not satisfy the conclusion of Theorem \ref{main thm}.

\begin{exam}
Let $G=\langle a,b|b a b^{-1}a\rangle$ be the fundamental group of a Klein bottle, and $\phi$ an endomorphism of $G$. Since $a^{-1}b^{\epsilon}=b^{\epsilon}a, ab^{\epsilon}=b^{\epsilon}a^{-1}$ in $G$ for $\epsilon=\pm 1$, any element $g$ of $G$ can be write uniquely as $b^m a^n$. Suppose
$$\phi(a)=b^s a^t,\quad \phi(b)=b^p a^q.$$
We have
$$\phi(b a b^{-1}a)=b^p a^q b^s a^t a^{-q} b^{-p} b^s a^t= b^{2s} a^{(-1)^{s-p}[(-1)^s q+t-q]+t}=1.$$
Thus $s=0$ and $(-1)^p t+t=0$. There are two cases.

Case (1).  $t=0$. Then $\phi(a)=1, \phi(b)=b^p a^q$.

If there exists $1\neq g=b^m a^n\in G$ fixed by $\phi$, then
$$b^m a^n=g=\phi(g)=\phi(b^m a^n)=\phi(b^m)=(b^p a^q)^m=b^{mp} a^k.$$
We have $m=0$ or $p=1$. If $m=0$, then $g=a^n$ and $g=\phi(g)=\phi(a^n)=1$ contradicts to that $g$ is nontrivial.

So $p=1$, namely $\phi(a)=1, \phi(b)=b a^q$. $\fix\phi$ is a cyclic subgroup generated by $b a^q$.

Case (2).  $t\neq 0$ and $p$ is odd. Then $\phi(a)=a^t,\phi(b)= b^p a^q$.

If there exists $1\neq g=b^m a^n\in G$ fixed by $\phi$, then a same argument as in Case (1) implies $m=0$ or $p=1$. There are two subcases.

Subcase (2.1). If $p\neq 1$ which means $m=0$ , then $g=a^n\neq 1$ and $g=\phi(g)=\phi(a^n)=a^{tn}$. We have $t=1$ and $\phi(a)=a,\phi(b)= b^p a^q$. $\fix \phi$ is a cyclic subgroup generated by $a$.

Subcase (2.2). If $p=1$, then $\phi(a)=a^t,\phi(b)= b a^q$.

If $t=1$ and $q=0$, then $\phi=id$ and $\fix \phi=G$.

If $t=1$ and  $q\neq 0$, then $\fix \phi$ is generated by $a$ and $b^2$ which is isomorphic to a rank two free abelian group $\Z\oplus \Z$.

If $t \neq 1$, then we have
 $$b^m a^n=g=\phi(g)=\phi(b^m a^n)=(b a^q)^m a^{tn}=\left\{\begin{array}{ll}
                        b^m a^{q+tn},  &  m\mbox{ is odd} \\
                        b^m a^{tn},  &  m \mbox{ is even}
                      \end{array}\right..$$
Note that for any $k\in \Z$, $b^2=(ba^k)(ba^k)$. So if ${\frac{q}{1-t}}\in \Z$,  then $\fix\phi$ is a cyclic subgroup generated by $b a^{\frac{q}{1-t}}$; if ${\frac{q}{1-t}}$ is not an integer then $\fix\phi$ is a cyclic subgroup generated by $b^2$.

In conclusion, we have prove that $\fix \phi$ is either $G$, $\langle a,b^2\rangle\cong\Z\oplus \Z$, $\Z$ or trivial for any endomorphism $\phi$ of $G$. So $\fix\mathcal B$ is also one of such subgroups for any family of endomorphisms $\mathcal B$.
\end{exam}

The following example shows that Theorem \ref{main thm} is sharp.

\begin{exam}
Let the surface group $G=\langle a_1,b_1,\ldots, a_g,b_g|\prod_{i=1}^g [a_i,b_i] \rangle$. Consider the automorphism $\phi_n: G\to G$ induced by a Dehn twist:
$$a_i\mapsto a_i,\quad i=1,\ldots ,g;$$
$$b_j\mapsto b_j,\quad j=1,\ldots,g-1;\quad\quad b_g\mapsto a_g^nb_g.$$
Then
$$\bigcap_{n=1}^\infty\fix\phi_n=\langle a_1,b_1,\ldots, a_{g-1},b_{g-1}, a_g\rangle\cong F_{2g-1},$$
a free group with rank $2g-1$.
\end{exam}

The example below shows that the intersection of two retracts of a surface group is not a retract, which is not similar to the case of free groups, see Proposition \ref{intersection of retracts of free gp}.

\begin{exam}
Let $G=\langle a,b,c,d|a^{-1}b^{-1}abc^{-1}d^{-1}cd\rangle$ be a surface group, and
$$A=\langle a,b\rangle<G, \quad\quad B=\langle c,d\rangle<G.$$
Note that
$$\phi: G\to A,\quad a,d\mapsto a, \quad b,c\mapsto b$$
and
$$\psi: G\to B,\quad a,d\mapsto d, \quad b,c\mapsto c$$
are two retractions. Then $A$ and $B$ are two retracts of $G$. But the intersection
$$A\cap B=\langle a^{-1}b^{-1}ab\rangle<G$$
is not a retract. Indeed, if there is a retraction $\pi: G\to \langle a^{-1}b^{-1}ab\rangle$, then
$$\pi(a^{-1}b^{-1}ab)=\pi(a^{-1})\pi(b^{-1})\pi(a)\pi(b)=\pi(a^{-1})\pi(a)\pi(b^{-1})\pi(b)=1\in \langle a^{-1}b^{-1}ab\rangle,$$
the second equality holds since $\langle a^{-1}b^{-1}ab\rangle$ is free. It contradicts to $\pi|_{\langle a^{-1}b^{-1}ab\rangle}=id$.
\end{exam}

On retracts of surface groups, we have a question below generalized from \cite[Question 20]{B}: Is every retract $R$ of a finitely generated free group $F$ inert in $F$?

\begin{ques}\label{inert question}
Is every retract $H$ of a surface group $G$ inert in $G$? Namely, is
$$\rk (H\cap K)\leq \rk K$$
for any subgroup $K\leq G$?
\end{ques}

If $K$ is also a surface group, then the answer is affirmative.

Indeed, we have a finite covering $p: \tilde S\to S$ of closed surfaces such that $G=\pi_1(S)$ and $K=p_*(\pi_1(\tilde S))\leq G$. Since $H$ is a proper retract of $G$, $H$ is a free subgroup with $\rk H\leq \frac{1}{2} \rk G$ according to Lemma \ref{intersection of two retracts}. Thus there is a noncompact surface $F'$ and a covering $f': F'\to S$ such that $H=f'_*(\pi_1(F'))$. Pick a compact incompressible subsurface $F\subset F'$ such that $\pi_1(F)=\pi_1(F')$, then the map $f=f'|_F: F\to S$ is $\pi_1$-injective and $H=f_*(\pi_1(F))$. As in the proof of Theorem \ref{inert of geometric subgp}, consider the pull back map $p': M\to F$ of $p$ and $f$, where
$$M=\{(x,y)\in F\times \tilde S|f(x)=p(y)\}$$
such that $p'((x,y))=x$. Since $p: \tilde S\to S$ is a finite covering, $p'$ is also a finite covering.
Let $M_0\subseteq M$ be the component containing the base point. Then $p'|_{M_0}:M_0\to F$ is a covering of compact surfaces with nonempty boundary of sheets $\chi(M_0)/\chi(F)\leq \chi(\tilde S)/\chi(S)$. It implies that
$$\frac{1-\rk\pi_1(M_0)}{1-\rk\pi_1(F)}\leq \frac{2-\rk\pi_1(\tilde S)}{2-\rk\pi_1(S)}.$$
Note that $H\cap K=f_*p'_*(\pi_1(M_0))\cong \pi_1(M_0),$
thus
$$\rk(H\cap K)-1\leq \frac{(\rk H-1)(\rk K-2)}{\rk G-2}.$$
Hence
$$\rk(H\cap K)\leq \frac{1}{2}\rk K\leq \rk K.$$

If $K$ is free, and the subgroup $\langle H,K\rangle\leq G$ generated by $H$ and $K$ is also free, then Question \ref{inert question} becomes \cite[Question 20]{B}).

If $K$ is free, and $\langle H,K\rangle\leq G$ is a surface group, what will happen?

\vspace{6pt}

\noindent\textbf{Acknowledgements.} The authors would like to thank Hongbin Sun for valuable communications. This work was carried out while the second author was visiting Princeton University and he would like to thank for their hospitality. The first author is supported by NSFC (No. 11001190 and No. 11271276), and the second author is partially supported by NSFC (No. 11201364) and ``the Fundamental Research Funds for the Central Universities".

%References--------------------------------------------------------------------------------------------------------------------------

\end{document}